\newtheorem{theorem}{Theorem}
\newtheorem{lemma}{Lemma}
\theoremstyle{remark}
\newtheorem{rem}{Remark}
\newcommand\redout{\bgroup\markoverwith
{\textcolor{red}{\rule[.5ex]{2pt}{0.4pt}}}\ULon}
\begin{document}
%
\title{Distributed Banach-Picard Iteration: Application to Distributed EM and Distributed PCA}
\title{Distributed Banach-Picard Iteration: Application to Distributed Parameter Estimation and PCA}
%
%
%

\author{Francisco~Andrade,
        M\'{a}rio~A.~T.~Figueiredo,~\IEEEmembership{Fellow,~IEEE},
        and Jo\~{a}o~Xavier
\thanks{The authors are with the Instituto Superior T\'{e}cnico, Universidade de Lisboa, Lisbon, Portugal.} %
\thanks{F.~Andrade (francisco.andrade@tecnico.ulisboa.pt) and M.~Figueiredo (mario.figueiredo@tecnico.ulisboa.pt) are also with the Instituto de Telecomunicaç\~{o}es, Lisbon, Portugal.}
\thanks{J.~Xavier (jxavier@isr.tecnico.ulisboa.pt) is  also with the Laboratory for Robotics and Engineering Systems, Institute for Systems and Robotics, Lisbon, Portugal.}}

%
%

\markboth{SUBMITTED}%
{Shell \MakeLowercase{\textit{et al.}}: Bare Demo of IEEEtran.cls for IEEE Journals}
%



\maketitle

\begin{abstract}
In recent work, we proposed a \emph{distributed Banach-Picard iteration} (DBPI) that allows a set of agents, linked by a communication network, to find a fixed point of a \emph{locally contractive} (LC) map that is the average of individual maps held by said agents. In this work, we build upon the DBPI and its \emph{local linear convergence} (LLC) guarantees to make several contributions. We show that Sanger’s algorithm for \emph{principal component analysis} (PCA) corresponds to the iteration of an LC map that can be written as the average of local maps, each map known to each agent holding a subset of the data. Similarly, we show that a variant of the \emph{expectation-maximization} (EM) algorithm for parameter estimation from noisy and faulty measurements in a sensor network can be written as the iteration of an LC map that is the average of local maps, each available at just one node. Consequently, via the DBPI, we derive two distributed algorithms – distributed EM and distributed PCA – whose LLC guarantees follow from those that we proved for the DBPI. The verification of the LC condition for EM is challenging, as the underlying operator depends on random samples, thus the LC condition is of probabilistic nature.
\end{abstract}

\begin{IEEEkeywords}
Distributed Computation, Banach-Picard Iteration, Fixed Points, Distributed EM, Distributed PCA, Consensus.
\end{IEEEkeywords}

%
\IEEEpeerreviewmaketitle

\section{Introduction}

\IEEEPARstart{P}{arameter} estimation from noisy data and dimensionality reduction are two of the most fundamental tasks in signal processing and data analysis. In many scenarios, such as sensor networks and IoT, the underlying data is distributed among a collection of agents that cooperate to jointly solve the problem, \textit{i.e.}, find a consensus solution without sharing the data or sending it to a central unit \cite{Saber}. This paper addresses the two problems above mentioned  in a distributed setting, proposing and analysing two  algorithms that are instances of the \textit{distributed Banach-Picard 
iteration} (DBPI), which we have recently introduced and proved to enjoy local linear convergence \cite{FranciscodistributedPicardTheory}. In this introductory section, after presenting the formulations and motivations of the two problems considered, we review the DBPI and summarize our contributions and the tools that are used for the convergence proofs. 

\subsection{Problem Statement: Dimensionality Reduction} \label{PCA problem Statement}

Dimensionality reduction aims at representing high-dimensional data in a lower dimensional space, which can be crucial to reduce the computational complexity of manipulating  and processing this data, and is a core task in modern data analysis, machine learning, and related areas. The standard linear dimensionality reduction tool is \emph{principal component analysis} (PCA), which  allows expressing a high-dimensional dataset on the basis formed by the top eigenvectors of its sample covariance matrix. PCA  first appeared in the statistics community in the beginning of the 20th century \cite{pearson1901liii} and became one of the workhorses of statistical data analysis, with \emph{dimensionality reduction} being a notable application. Nowadays, with the ever increasing collection of data by spatially dispersed agents, developing algorithms for  distributed PCA constitutes a relevant area of research -- see, \textit{e.g.}, \cite{qu2002principal,liang2014improved,kannan2014principal,boutsidis2016optimal,garber2017communication,bai2005principal,kargupta2001distributed,qi2004global,abu2002distributed} (master-slave communication architecture), and \cite{scaglione2008decentralized,le2008distributed,yildiz2009distributed,suleiman2016performance,korada2011gossip,li2011distributed,schizas2015distributed,wu2017power} (arbitrarily meshed network communication architecture). For a recent and comprehensive review on these works, see, \textit{e.g.}, \cite{wu2018review}; for a very recent work see \cite{Gang}. 

In the (arbitrarily meshed network) distributed setting, consider  a set of $N$ agents linked by an undirected and connected communication graph; the nodes are the agents, and the edges represent the communication channels between the agents. Each agent $n$ holds a finite set  of points in $\mathbb{R}^d$, $\mathbf{Y}_n \subseteq \mathbb{R}^d$, and the agents seek to collectively find the top $m$ eigenvectors\footnote{This means the $m$ eigenvectors associated to the largest $m$ eigenvalues.} of
\begin{align*}
C=\frac{1}{M}\sum_{n=1}^N C_n,
\end{align*}
(assumed to be positive definite, \emph{i.e.,} $C \succ 0$), where  $M=\sum_{n=1}^N |\mathbf{Y}_n|$, \emph{i.e.}, the sum of the cardinalities of each $\mathbf{Y}_n$, and
\begin{align*}
C_n=\sum_{y \in \mathbf{Y}_n} yy^T.
\end{align*}

\subsection{Problem Statement: Distributed Parameter Estimation with Noisy and Faulty Measurements} \label{EM Problem Statement}
Consider  a collection of spatially distributed sensors monitoring the environment, a common scenario for information processing or decision making tasks see, \textit{e.g.,}  \cite{dimakis2010gossip,xiao2006distributed,boyd2006randomized,barbarossa2007decentralized,zhao2007information,schizas2007consensus,stankovic2010decentralized,sayed2014diffusion}. Often, these sensors communicate wirelessly, maybe in a harsh environment, which may result in faulty communications or sensor malfunctions \cite{pereira2018parameter}. The setup is modelled as follows: $N$ agents, linked by an undirected and connected communication graph, each holding an independent observation given by
\begin{align}
Y_n=Z_nh_n^T\mu^\star+W_n, \quad n=1,\ldots,N. \label{NoisyMeasurements}
\end{align}
In \eqref{NoisyMeasurements}, $\mu^\star \in \mathbb{R}^d$ is a fixed and unknown parameter, each $h_n\in \mathbb{R}^d$ is assumed to be known only at agent $n$, $\lbrace W_j\rbrace_{n=1}^N$ are independent and identically distributed (i.i.d.) zero-mean Gaussian random variables with variance $(\sigma^\star)^2$, and $\lbrace Z_n\rbrace_{n=1}^N$ are i.i.d. Bernoulli random variables ($Z_n\in\{0,1\}$) with $f_{Z_n}(z_n|p^\star)=(p^\star)^{z_n}(1-p^\star)^{1-z_n}$. This formulation models a scenario where sensor $n$ measures the parameter $\mu^\star$ with probability $p^\star$ and, with probability $1-p^\star$, it senses only noise, indicating a transducer failure \cite{pereira2018parameter}. The agents seek to collectively estimate $\mu^\star$, treating $p^\star$ and $\sigma^\star$ as nuisance (or latent) parameters.
Observe that if the binary variables $Z_n$ were not random, but fixed and known, the model could be regarded as a (distributed) linear regression problem. However, the randomness introduces an extra layer of difficulty accounting for potential sensor failures. 

A decentralized algorithm, rather than one where each sensor sends its data to a central node, is potentially more robust to faulty wireless communications that may render a sensor useless. Moreover, a decentralized algorithm can yield considerable energy savings \cite{dimakis2010gossip},  a very desirable feature.

\subsection{Distributed Banach-Picard Iteration (DBPI)}
Our recent  work \cite{FranciscodistributedPicardTheory} addressed a general distributed setup where $N$ agents, linked by a  communication network, collaborate to collectively find an attractor $x^\star$ of a map $H$ that can be implicitly represented as an average of local maps, \textit{i.e.}, 
\[
H=\frac{1}{N}\sum_{n=1}^N H_n,
\] 
where $H_n$ is the map held by agent $n$. As defined in \cite{FranciscodistributedPicardTheory}, an attractor  $x^\star$ of $H$ is  a fixed point thereof, $H(x^\star)=x^\star$, satisfying 
\begin{align} \label{JacobianCondition}
\rho\big(\mathbf{J}_H(x^\star)\big)<1,
\end{align}
where $\rho\big(\mathbf{J}_H(x^\star)\big)$ is the spectral radius of the Jacobian of $H$ at $x^\star$. Moreover, the map $H$ is not assumed to have a symmetric Jacobian and no global structural assumptions (\textit{e.g.}, Lipschitzianity or coercivity) are made.

The main contributions of \cite{FranciscodistributedPicardTheory} are a distributed algorithm to find $x^\star$ -- DBPI -- and the proof that it enjoys the \textit{local  linear convergence} of its centralized counterpart, \emph{i.e.}, of the (standard) Banach-Picard iteration: $x^{k+1}=H(x^k)$.

\subsection{Contributions and Related Work}
In this work, we propose addressing the distributed inference problems described in Subsections \ref{PCA problem Statement} and \ref{EM Problem Statement} using two instantiations of DBPI. More concretely, we propose:

\vspace{0.22cm}
\begin{enumerate}\itemsep0.25cm
	\item A distributed algorithm for PCA, which results from considering a map that can be implicitly written as an average of local maps and that has as a fixed point the solution to the PCA problem. 
	\item An algorithm that stems from formulating the problem described in Subsection \ref{EM Problem Statement} as a fixed point of a map induced by the stationary equations of the corresponding \textit{maximum likelihood estimation} (MLE) criterion. This map corresponds to the iterations of a slightly modified EM algorithm for a \emph{mixture of linear regressions} \cite{faria2010fitting}.
\end{enumerate}

\vspace{0.22cm}

The guarantees of local linear convergence for these distributed algorithms involve verifying condition \eqref{JacobianCondition} for the maps inducing them, which allows invoking the results from \cite{FranciscodistributedPicardTheory}. Consequently, a great portion of this paper is devoted to proving that \eqref{JacobianCondition} holds for these maps, which is far from trivial.

The distributed PCA problem (see \cite{dimakis2010gossip} for a review of distributed PCA) described in Subsection \ref{PCA problem Statement} was addressed in  \cite{gang2019fast} , where an algorithm termed \emph{accelerated distributed Sanger's algorithm} (ADSA) was proposed. The  authors consider a ``mini-batch variant'' of \textit{Sanger's algorithm} (SA, see \cite{sanger1989optimal}) and, inspired by \cite{shi2015extra}, arrive at ADSA. Although  no proof of convergence was presented in \cite{gang2019fast}, very recent work by the same authors proves convergence of their algorithm \cite{Gang2021}. Our contributions in this context are twofold: we show that ADSA is recovered by applying DBPI to SA, and that condition \eqref{JacobianCondition} holds for SA, thus, the guarantees of local convergence follow directly as a consequence of the results in \cite{FranciscodistributedPicardTheory}. We mention that no computer simulations of ADSA are presented in this work, since these can be found in \cite{gang2019fast}.

The problem presented in Subsection \ref{EM Problem Statement} was addressed in \cite{pereira2018parameter}, where \eqref{NoisyMeasurements} is regarded as a \emph{finite mixture model} \cite{mixtures_book}. To estimate $\mu^\star$, the authors proposed a distributed version of the \textit{expectation-maximization} (EM \cite{Dempster}) algorithm, termed  \textit{diffusion-averaging distributed EM}  (DA-DEM). However, DA-DEM, very much in the spirit of \cite{kar2008distributed,nedic2009distributed}, uses a diminishing step-size to achieve convergence,  leading to a sublinear convergence rate. Our contribution is an algorithm for this  problem that extends a slightly modified version of the centralized EM algorithm to distributed settings. The key challenge is to show that we can ``expect'' condition \eqref{JacobianCondition} to hold, and we dedicate a considerable amount of effort to this endeavor. We use the term ``expect'', since the operator underlying DBPI depends on the observed samples and, therefore, the existence of an attractor is a probabilistic question. Finally, we compare our algorithm with DA-DEM  through  Monte Carlo simulations, confirming the linear convergence rate of our algorithm and the sublinear convergence of DA-DEM.

There is considerable work on the ``probabilistic linear convergence'' of  EM   \cite{redner1984mixture}, \cite{sundberg1974maximum},  \cite{balakrishnan2017statistical}.
However,  neither the results in \cite{redner1984mixture}, nor those in \cite{sundberg1974maximum} encompass the mixture model underlying \eqref{NoisyMeasurements}. The mixture of regressions presented in \cite{balakrishnan2017statistical} bears some similarity with the model underlying \eqref{NoisyMeasurements}, but it is not the same: in  \cite{balakrishnan2017statistical}, $p$ is fixed at $1/2$ and $Z_n\in\{-1,1\}$ (rather than $\{0,1\}$), thus there are no measurements that are just noise. Furthermore, \cite{balakrishnan2017statistical} is primarily concerned with statistical guarantees for the error with respect to the ground truth, while we address the goal of establishing \eqref{JacobianCondition}.

As mentioned in \cite{pereira2018parameter}, there are two other relevant works on distributed EM, namely, \cite{nowak2003distributed} and \cite{forero2011distributed}. However (see \cite{pereira2018parameter}), both these works address a different problem of Gaussian mixture density estimation. Moreover, in the case of \cite{nowak2003distributed}, the algorithm demands a cyclic network topology, and, in \cite{forero2011distributed} the algorithm requires higher computational load on each node, since it is based on \emph{alternating direction method of multipliers}.

To summarize, we show that ADSA \cite{gang2019fast} is an instance of DBPI and  propose an algorithm to solve the mixture model underlying \eqref{NoisyMeasurements}, also as an instance of DBPI. Consequently, their corresponding guarantees of local linear convergence result from the attractor condition \eqref{JacobianCondition} for the map underlying the corresponding centralized counterparts. We compare DA-DEM and our proposed algorithm through numerical Monte Carlo simulations, and the  results confirm the linear convergence of our algorithm and the sublinear convergence of DA-DEM. 

\subsection{Organization of the Paper}
Section \ref{ReviewOfDistPic} briefly reviews the DBPI proposed in \cite{FranciscodistributedPicardTheory} and  the main convergence result therein proved. The  characterization of the fixed points of the ``mini-batch'' variant of Sanger's algorithm, as well as the attractor condition, are presented in Section \ref{PCA Section}. Section \ref{EM Section} describes the centralized variant of EM  underlying the proposed distributed algorithm for the problem in Subsection \ref{EM Problem Statement}, presents the verification of the attractor condition, and reports  the results of  simulations comparing DBPI with DA-DEM. 


\subsection{Notation}\label{Notation}
The set of real $n$ dimensional vectors with positive components is denoted by $\mathbb{R}^n_{>0}$.
Matrices and vectors are denoted by upper and lower case letters, respectively.  The spectral radius of a matrix $A$ is denoted by $\rho(A)$ and its Frobenius norm by $\|A\|_F$. Given a map $H$, $\mathbf{J}_H(x)$, and $dH(x)$ denote, respectively, the Jacobian of $H$  at $x$ and the differential of $H$ at $x$. Given a vector $v$, $v_s$ denotes its $s$th component; given a matrix $A$, $A_{st}$ denotes the element on the $s$th row and $t$th column and $A^T$  its transpose. 
The $d$-dimensional identity matrix is denoted by $I_d$, $\mathbf{1}_d$ is the $d$-dimensional vector of ones, and $\mathbf{0}_{m,n}$ is the $m \times n$ matrix of zeros.  Whenever convenient, we will denote a vector with two stacked blocks, $[v^T, u^T]^T$, simply as $(v,u)$. Given a square matrix $A$, $\mathcal{U}(A)$ is an upper triangular matrix of the same dimension as $A$ and whose upper triangular part coincides with that of $A$. Given a norm $\|\cdot\|$, $\bar{B}^{\|\cdot\|}_{\delta,\theta}$ denotes the closed ball of center $\theta$ and radius $\delta$ with respect to $\|\cdot\|$.  Random variables and vectors are  denoted by upper case letters and, for random variable $Y$, the probability density (or mass) function of $Y$ is denoted by $f_Y$.  The probability density of a Gaussian of mean $\mu$ and variance $\sigma^2$ is denoted by $\mathcal{N}(\cdot|\mu,\sigma^2)$. 

\section{Review of Distributed Banach-Picard Iteration} \label{ReviewOfDistPic}
Consider a network of $N$ agents, where the interconnection structure is represented by an undirected connected graph: the nodes correspond to the agents and an edge between two agents indicates  they can communicate (are neighbours). In the scenario considered in \cite{FranciscodistributedPicardTheory}, each agent $n \in \{1,...,N\}$ holds an operator $H_n:\mathbb{R}^d \to \mathbb{R}^d$, and the goal is to compute a fixed point of the average operator 
\begin{align}
H=\frac{1}{N}\sum_{n=1}^N H_n. \label{DistributedMap}
\end{align}
Each agent $n$ is restricted to performing computations involving $H_n$ and communicating with its neighbours. 

Our only assumption about $H$ is the existence of a locally attractive fixed point  $x^\star$, \emph{i.e.}, satisfying \eqref{JacobianCondition}.

Let $R$ be the map on $\mathbb{R}^{dN}$ defined, for $z=[z_1^T,\ldots,z_N^T]^T$ (with $z_j \in \mathbb{R}^d$ held by agent $j$) by 
\begin{align}\label{Residuals}
R(z)=\Big[ \big(H_1(z_1)-z_1\big)^T,\ldots,\big(H_N(z_N)-z_N\big)^T\Big]^T,
\end{align}
 and let $W=\tilde{W}\otimes I_d$, where $\tilde{W}$ is the so-called Metropolis weight matrix associated to the communication graph  \cite{xiao2004fast}. The algorithm proposed in \cite{FranciscodistributedPicardTheory} is presented in Algorithm 1, where $\alpha \in \mathbb{R}_{>0}$.

\begin{algorithm}
\caption{Distributed Banach-Picard Iteration (DBPI)}
\begin{algorithmic}[1]
  \small
  \STATE Initialization:
  \begin{equation*}
\begin{aligned}
z^{0} &\in \mathbb{R}^{dN},\\
z^1 &= Wz^0+\alpha R(z^0),\\
\end{aligned}
\end{equation*}
\STATE Update:
\begin{equation*}
z^{k+2}=(I+W)z^{k+1}-\frac{I+W}{2}z^k+\alpha\big(R(z^{k+1})-R(z^k)\big).
\end{equation*} 
\end{algorithmic}
\end{algorithm}

Informally,  in \cite{FranciscodistributedPicardTheory}, we show that $\alpha$ can be chosen such that if $z^k$ gets sufficiently close to $\mathbf{1}\otimes x^\star$, then it converges to  $\mathbf{1}\otimes x^\star$ at least linearly (the precise statement and proof can be found in  \cite{FranciscodistributedPicardTheory}). Notice that $z$ being equal to $\mathbf{1}\otimes x^\star$ means that all agents are in consensus, holding a copy of the fixed point $x^\star$.

\section{Distributed PCA}  \label{PCA Section}

\subsection{Algorithm}
We obtain a distributed algorithm for solving the PCA problem described in section \ref{PCA problem Statement} as an instantiation  of DBPI  by  introducing a map $H$ with a fixed point at the desired solution. Moreover, the guarantees of local linear convergence follow as a result of verifying \eqref{JacobianCondition}.

The ``mini batch variant'' of Sanger's algorithm (SA) proposed in \cite{gang2019fast} and inspired by \cite{sanger1989optimal} is the Banach-Picard iteration 
\begin{align*}
X^{k+1}=H(X^k), 
\end{align*}
where $H:\mathbb{R}^{d\times m} \to \mathbb{R}^{d\times m}$ is given by
\begin{align} \label{SangerMap}
H(X)=X+\eta\Big(CX-X\mathcal{U}\big(X^TCX\big)\Big),
\end{align}
and $\mathcal{U}$ was defined in Subsection \ref{Notation}. Observe that $H$ can be written as an average of local maps, \emph{i.e.},
\begin{align*}
H=\frac{1}{N}\sum_{n=1}^N H_n,
\end{align*}
where $H_n:\mathbb{R}^{d\times m} \to \mathbb{R}^{d\times m}$ is defined by
\begin{align} \label{SangerDirection}
H_n(X)=X+\eta \Big(\frac{N}{M}C_nX-X\mathcal{U}\big(X^T\frac{N}{M}C_nX\big)\Big).
\end{align}

Let $R$ be the map on $\mathbb{R}^{(d\times m)N}$ defined, for $z=[z_1^T,\ldots,z_N^T]^T$, as in \eqref{Residuals}, with $H_n$ as in \eqref{SangerDirection}. The distributed algorithm presented in \cite{gang2019fast}, named ADSA,  is exactly the DBPI, \emph{i.e.}, Algorithm 1, with this choice of $R$.

\subsection{Convergence: Main Results}
The convergence analysis amounts to verifying the attractor condition \eqref{JacobianCondition} for $H$, thus establishing, as a corollary of the results in \cite{FranciscodistributedPicardTheory}, the local linear convergence of Algorithm 1 with each $H_n$ in \eqref{Residuals} defined as in \eqref{SangerDirection} (equivalently, ADSA). 

We start with the following lemma (proved in Appendix \ref{AppendixProofOfLemma}) showing that the solution sought in the PCA problem is a fixed point of $H$ (as defined in \eqref{SangerMap}).

\begin{lemma} \label{SangerFixedPointTheorem}
Let $C\succ 0$. If $X^\star \in \mathbb{R}^{d\times m}$ satisfies
\begin{align}\label{SangerFixedPoint}
CX^\star=X^\star\mathcal{U}((X^\star)^TCX^\star)
\end{align}
then, each column of $X^\star$ is either $0$ or a unit-norm eigenvector of $C$. Moreover, the columns are orthogonal, \emph{i.e.}, $(X^\star)^TX^\star$ is diagonal with the diagonal elements being either one or zero.
\end{lemma}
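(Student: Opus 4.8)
The plan is to write $X^\star=[x_1,\dots,x_m]$ and to read the fixed-point equation \eqref{SangerFixedPoint} one column at a time, exploiting the definition of $\mathcal{U}$. Setting $A=(X^\star)^TCX^\star$ (symmetric, since $C$ is), the $j$-th column of $\mathcal{U}(A)$ involves only the entries $A_{ij}$ with $i\le j$, so \eqref{SangerFixedPoint} becomes
\begin{align}
Cx_j=\sum_{i=1}^{j}(x_i^TCx_j)\,x_i,\qquad j=1,\dots,m.
\end{align}
I would then prove the statement by induction on $j$, with induction hypothesis that each of $x_1,\dots,x_{j-1}$ is either $0$ or a unit-norm eigenvector of $C$ and that the nonzero ones among them are mutually orthogonal (hence orthonormal).

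For the base case $j=1$, the equation reads $Cx_1=(x_1^TCx_1)x_1$. If $x_1\ne 0$ this exhibits $x_1$ as an eigenvector with eigenvalue $\lambda_1=x_1^TCx_1$; since $C\succ 0$ forces $\lambda_1>0$, the relation $\lambda_1=\lambda_1\|x_1\|^2$ then yields $\|x_1\|=1$ (and if $x_1=0$ there is nothing to prove).

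For the inductive step, I would take the inner product of the column-$j$ equation with a nonzero previous column $x_k$ ($k<j$). By orthonormality of the previous nonzero columns, the $i=k$ term on the right-hand side exactly reproduces the left-hand side and all other terms with $i<j$ vanish, leaving $(x_j^TCx_j)(x_k^Tx_j)=0$. Because $C\succ 0$, either $x_j^TCx_j=0$, which forces $x_j=0$ (a zero column, and orthogonality is then vacuous), or $x_j\ne 0$, in which case $x_k^Tx_j=0$ for every nonzero $x_k$ with $k<j$. Feeding this orthogonality back into the column equation, and using $x_i^TCx_j=(Cx_i)^Tx_j=\lambda_i\,x_i^Tx_j=0$ for the nonzero eigenvector columns (where symmetry of $C$ is invoked), annihilates the whole sum over $i<j$ and reduces the equation to $Cx_j=(x_j^TCx_j)x_j$; the base-case argument then shows $x_j$ is a unit-norm eigenvector. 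This closes the induction and gives both claims at once, the orthonormality of the nonzero columns being precisely the statement that $(X^\star)^TX^\star$ is diagonal with $0/1$ diagonal entries.

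I expect the main obstacle to be the coupling introduced by the upper-triangular operator $\mathcal{U}$, which rules out a naive column-by-column eigendecomposition; the induction is exactly what decouples the columns. The two load-bearing uses of $C\succ 0$ -- to exclude a nonzero isotropic direction (turning $x_j^TCx_j=0$ into $x_j=0$) and to normalize each eigenvector (turning $\lambda_j>0$ into $\|x_j\|=1$) -- together with careful bookkeeping of the zero columns in the case analysis, are where I would be most careful.
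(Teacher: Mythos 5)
Your proof is correct and follows essentially the same route as the paper's: read \eqref{SangerFixedPoint} column by column, multiply by earlier columns to extract orthogonality, reduce each column equation to $Cx_j=(x_j^TCx_j)x_j$, and use $C\succ 0$ both to rule out nonzero isotropic columns and to force unit norm. In fact, the paper only works out columns one and two explicitly and asserts the induction can be completed; your explicit induction hypothesis and uniform bookkeeping of zero columns supply precisely the step the paper leaves to the reader.
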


The following theorem guarantees that the Banach-Picard iteration of $H$ has local linear convergence to its fixed points. 

\

\begin{theorem} \label{SAngerJacobianTheorem}
Let $\lambda_1 > \ldots > \lambda_m >\lambda_{m+1}\geq \ldots\geq \lambda_d>0$ be the eigenvalues of $C$. Suppose that $X^\star$ is a $d\times m$ matrix such $Cx^\star_i=\lambda_i x^\star_i$ (where $x_i^\star$ denotes the $i$th column of $X^\star$ and $C$ is as defined in \eqref{SangerMap}), for $i=1, \ldots, m$, and $(X^\star)^TX^\star=I_m$. Then,  there exists $\eta^\star$ such that, for $0<\eta<\eta^\star$,
\begin{align*}
\rho\big(\mathbf{J}_H(X^\star)\big)<1.
\end{align*}
\end{theorem}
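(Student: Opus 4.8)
The plan is to write $H = \mathrm{id} + \eta\, G$ with $G(X) = CX - X\mathcal{U}(X^{T}CX)$, so that $\mathbf{J}_H(X^\star) = I + \eta\,\mathbf{J}_G(X^\star)$ and the eigenvalues of $\mathbf{J}_H(X^\star)$ are exactly $1+\eta\nu$, where $\nu$ ranges over the eigenvalues of $\mathbf{J}_G(X^\star)$. Thus $\rho(\mathbf{J}_H(X^\star))<1$ will follow, for all sufficiently small $\eta>0$, once I show that every eigenvalue $\nu$ of $\mathbf{J}_G(X^\star)$ is \emph{real and strictly negative}: indeed $1+\eta\nu\in(-1,1)$ as soon as $0<\eta<2/\max_\nu|\nu|$, which will define $\eta^\star$. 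The first concrete step is to differentiate $G$, obtaining
$$dG(X)[\Delta] = C\Delta - \Delta\,\mathcal{U}(X^{T}CX) - X\,\mathcal{U}\big(\Delta^{T}CX + X^{T}C\Delta\big),$$
where I use that $\mathcal{U}$ is linear (hence its own differential).

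Next I would evaluate at $X^\star$. Using $CX^\star = X^\star\Lambda_m$ with $\Lambda_m = \mathrm{diag}(\lambda_1,\dots,\lambda_m)$ and $(X^\star)^{T}X^\star = I_m$, the term $\mathcal{U}((X^\star)^{T}CX^\star)=\mathcal{U}(\Lambda_m)=\Lambda_m$, and writing $A=(X^\star)^{T}\Delta$ one gets $\Delta^{T}CX^\star+(X^\star)^{T}C\Delta = A^{T}\Lambda_m+\Lambda_m A$. The decisive move is a change of coordinates: complete $X^\star$ to a full orthonormal eigenbasis $[X^\star\,|\,X^\star_\perp]$ of $C$, with $CX^\star_\perp = X^\star_\perp\Lambda_\perp$, $\Lambda_\perp=\mathrm{diag}(\lambda_{m+1},\dots,\lambda_d)$, and decompose every perturbation uniquely as $\Delta = X^\star A + X^\star_\perp B$. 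Projecting $dG(X^\star)[\Delta]$ back onto the two blocks shows that the differential \emph{block-diagonalizes}: the $A$-part maps to $\Lambda_m A - A\Lambda_m - \mathcal{U}(A^{T}\Lambda_m+\Lambda_m A)$ and the $B$-part maps to $\Lambda_\perp B - B\Lambda_m$, with no coupling between $A$ and $B$. The spectrum of $\mathbf{J}_G(X^\star)$ is therefore the union of the spectra of these two operators $\mathcal{L}_A$ and $\mathcal{L}_B$.

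The operator $\mathcal{L}_B$ is a Sylvester map; testing on rank-one matrices $e_i e_j^{T}$ shows its eigenvalues are $\lambda_{m+i}-\lambda_j$ (for $1\le i\le d-m$, $1\le j\le m$), all strictly negative thanks to the spectral gap $\lambda_m>\lambda_{m+1}$. For $\mathcal{L}_A$ I would read off the action entrywise, $\mathcal{L}_A(A)_{ij} = (\lambda_i-\lambda_j)A_{ij} - \mathcal{U}(A^{T}\Lambda_m+\Lambda_m A)_{ij}$. The $m^2$ entries then split into $m$ diagonal directions, each an eigendirection with eigenvalue $-2\lambda_i<0$, and $\binom{m}{2}$ invariant planes, one for each pair $\{(i,j),(j,i)\}$ with $i<j$, on which $\mathcal{L}_A$ acts as an \emph{upper-triangular} $2\times2$ matrix with diagonal entries $-\lambda_j$ and $\lambda_j-\lambda_i$. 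The strict ordering $\lambda_1>\dots>\lambda_m>0$ makes all of these strictly negative. Hence every eigenvalue of $\mathbf{J}_G(X^\star)$ is real and negative, and the conclusion follows by the reduction in the first paragraph.

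I expect the main obstacle to be organizational rather than deep: correctly carrying the linear-algebraic bookkeeping through the change of coordinates and, above all, verifying that $\mathcal{L}_A$ splits into the $1$- and $2$-dimensional invariant blocks just described. The two structural facts that must be invoked at exactly the right place are the \emph{strict} separation of the top $m$ eigenvalues (for the off-diagonal $2\times2$ blocks of $\mathcal{L}_A$) and the \emph{gap} $\lambda_m>\lambda_{m+1}$ (for $\mathcal{L}_B$); without these one would only obtain $\rho\le 1$, i.e.\ marginal rather than asymptotic stability.
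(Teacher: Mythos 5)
Your proposal is correct and is essentially the paper's own proof: the reduction $H=\mathrm{id}+\eta G$, the completion of $X^\star$ to an orthonormal eigenbasis of $C$ with the split $\Delta = X^\star A + X^\star_\perp B$ (which is exactly the similarity transform in the paper's Lemma 2), and your decoupled operators $\mathcal{L}_A$ and $\mathcal{L}_B$ are precisely the block equations \eqref{Two Eigenvalue equations1}--\eqref{Two Eigenvalue equations2}. The only, cosmetic, divergence is the final step: the paper runs a case analysis on the nonzero entries of a generic eigenvector (including the ``assume $\tilde{Z}_{ts}=0$'' maneuver), whereas you exhibit the invariant diagonal directions and the $\{(i,j),(j,i)\}$ planes on which $\mathcal{L}_A$ is triangular; both routes produce the identical negative spectrum $\lambda_{m+s}-\lambda_t$, $\lambda_s-\lambda_t$ (for $s>t$), $-\lambda_t$, and $-2\lambda_t$, with your version having the minor advantage of giving the eigenvalues with multiplicities directly.
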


\

\begin{rem}
The invertibility of $C$ that is assumed in the statements of Lemma 1 and Theorem 1 is not a big restriction. In fact, if $C\succeq 0$ rather than $C\succ 0$, then $\tilde{C}=C+\epsilon I$ satisfies $\tilde{C}\succ 0$ and has the same eigenvectors as $C$.
\end{rem}

\subsection{Proof of  Theorem \ref{SAngerJacobianTheorem}}

First, note that $H(X)=I+\eta S(X)$, where
\begin{align*}
S(X)=CX-\mathcal{U}\big(X^TCX\big).
\end{align*}
This implies $\mathbf{J}_H(X^\star)=I+\eta\mathbf{J}_S(X^\star)$, and, as a consequence, each eigenvalue of $\mathbf{J}_H(X^\star)$ is of the form $1+\eta\beta$, with $\beta$ being an eigenvalue of $\mathbf{J}_S(X^\star)$. The idea is to show that these eigenvalues $\beta$ of $\mathbf{J}_S(X^\star)$ enjoy a key property: they are real-valued and negative, $\beta<0$. Such property means that, for sufficiently small $\eta>0$, we have $|1+\eta\beta|<1$. To establish this key property, we divide the proof of Theorem 1  in two lemmas: Lemma 2 and Lemma 3. 

Lemma 2 will show that the eigenvalues of $\mathbf{J}_S(X^\star)$ coincide with those of the  linear map from $\mathbb{R}^{d\times m}$ to $\mathbb{R}^{d\times m}$ given by
\begin{align}\label{DSunderChangeOfVariables1}
W \to \hat{D}W-WD-A\mathcal{U}(DA^TW+W^TAD),
\end{align}
where 
\begin{align}\label{DiagonalEigenvavluesMatrix}
D=\mbox{diag}(\lambda_1, \ldots, \lambda_m),
\end{align}
\begin{align}\label{ExtendedDiagonalEigenvavluesMatrix}
\hat{D}=(\lambda_1,\ldots,\lambda_m,\lambda_{m+1},\ldots,\lambda_d),
\end{align}
and
\begin{align}\label{AuxiliaryMatrixA}
A=\begin{bmatrix}
I_m\\
\mathbf{0}_{d-m,m}
\end{bmatrix}.
\end{align}

Lemma 3 will show that the eigenvalues of \eqref{DSunderChangeOfVariables1} are real and negative.

\begin{lemma} Let $X^\star$ satisfy the conditions of Theorem \ref{SAngerJacobianTheorem}. The eigenvalues of $\mathbf{J}_S(X^\star)$, where 
\begin{align*}
S:\mathbb{R}^{d\times m} &\to \mathbb{R}^{d\times m}\\
X &\to CX-\mathcal{U}(X^TCX)
\end{align*}
coincide with those of the linear map given by
\begin{align*}
W \to \hat{D}W-WD-A\mathcal{U}(DA^TW+W^TAD),
\end{align*}
with $D$, $\hat{D}$, and $A$ given by, respectively,  \eqref{DiagonalEigenvavluesMatrix}, \eqref{ExtendedDiagonalEigenvavluesMatrix}, and \eqref{AuxiliaryMatrixA}.
\end{lemma}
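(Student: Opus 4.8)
The plan is to compute the differential of $S$ at $X^\star$ explicitly, simplify it using the eigenstructure hypotheses on $X^\star$, and then perform an orthogonal change of variables that diagonalizes $C$. Since the eigenvalues of $\mathbf{J}_S(X^\star)$ are exactly those of the linear operator $dS(X^\star)$ on $\mathbb{R}^{d\times m}$, and conjugation by an invertible linear map preserves eigenvalues, this will deliver the claim.

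First I would differentiate. Recalling $S(X)=CX-X\mathcal{U}(X^TCX)$ and using that $\mathcal{U}$ is linear, the product rule gives, for a direction $V\in\mathbb{R}^{d\times m}$,
\begin{align*}
dS(X^\star)[V] = CV - V\mathcal{U}\big((X^\star)^TCX^\star\big) - X^\star\mathcal{U}\big(V^TCX^\star + (X^\star)^TCV\big).
\end{align*}
I would then invoke the hypotheses $CX^\star=X^\star D$ (columns are eigenvectors) and $(X^\star)^TX^\star=I_m$ (orthonormality), which yield $(X^\star)^TCX^\star=D$ (already upper triangular, so $\mathcal{U}(D)=D$) and, by symmetry of $C$, $(X^\star)^TC=D(X^\star)^T$. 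Substituting gives the simplified form
\begin{align*}
dS(X^\star)[V] = CV - VD - X^\star\mathcal{U}\big(V^TX^\star D + D(X^\star)^TV\big).
\end{align*}

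Next I would complete $X^\star$ to an orthonormal eigenbasis of $C$: let $U=[X^\star \mid Q]$ be the orthogonal $d\times d$ matrix whose remaining columns $Q$ are unit eigenvectors for $\lambda_{m+1},\ldots,\lambda_d$, so that $U^TCU=\hat{D}$ and, crucially, $X^\star=UA$ with $A$ as in \eqref{AuxiliaryMatrixA}. Performing the invertible change of variables $V=UW$ and conjugating, $\tilde{L}(W):=U^T\, dS(X^\star)[UW]$, I compute term by term: $U^TCUW=\hat{D}W$, $U^TUWD=WD$, $U^TX^\star=A$, while $(UW)^TX^\star=W^TA$ and $(X^\star)^TUW=A^TW$. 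This produces precisely
\begin{align*}
\tilde{L}(W) = \hat{D}W - WD - A\mathcal{U}\big(DA^TW + W^TAD\big),
\end{align*}
i.e.\ the map \eqref{DSunderChangeOfVariables1}. Because $W\mapsto UW$ is an invertible linear map on $\mathbb{R}^{d\times m}$, the conjugate operator $\tilde{L}$ has the same characteristic polynomial, hence the same eigenvalues, as $dS(X^\star)$, equivalently as $\mathbf{J}_S(X^\star)$, which is the desired conclusion.

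The main obstacle I anticipate is not any single step but the bookkeeping in the differentiation and subsequent simplification: one must apply $\mathcal{U}$ to the full combination $V^TCX^\star+(X^\star)^TCV$ without splitting it in a way that drops terms, and must correctly use the eigenstructure to collapse $\mathcal{U}\big((X^\star)^TCX^\star\big)$ to $D$. The conceptual key that makes everything click is recognizing that the eigenvectors in $X^\star$ are exactly the first $m$ columns of the diagonalizing orthogonal matrix $U$, so that $U^TX^\star=A$; this is the identity that converts the intrinsic differential into the concrete coordinate form \eqref{DSunderChangeOfVariables1}.
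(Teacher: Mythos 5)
Your proposal is correct and takes essentially the same route as the paper's proof: you compute $dS(X^\star)$ via the product rule and linearity of $\mathcal{U}$, simplify using $CX^\star=X^\star D$ and $(X^\star)^TX^\star=I_m$, and then conjugate by the invertible map $W\mapsto UW$ (the paper's $\hat{X}^\star$), exploiting $U^TX^\star=A$ to land exactly on \eqref{DSunderChangeOfVariables1}, with similarity invariance of eigenvalues closing the argument. You even correctly work with $S(X)=CX-X\mathcal{U}(X^TCX)$ despite the factor of $X$ being dropped in the lemma's displayed formula, which is precisely what the paper's own differentiation does.
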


\begin{proof}
From the rules of matrix differential calculus (see \cite{magnus2019matrix} and \cite{lutkepohl1997handbook}), the differential of $S$ at $X$, denoted by $dS(X)$, is the linear map
\begin{align}\label{DifferentialOFSatX}
dX\to CdX-(dX)\mathcal{U}(X^TCX)-Xd\big(\mathcal{U}(X^TCX)\big)(X).
\end{align}
Observe that $\mathcal{U}$ is a linear map, thus the composition rule for differentials further yields
\begin{align}\label{CompositionRuleForDifferntials}
d\big(\mathcal{U}(X^TCX)\big)(X)= \mathcal{U}\big((dX)^TCX+X^TCdX\big).
\end{align}

By assumption, $CX^\star=X^\star D$ with $D$ given by \eqref{DiagonalEigenvavluesMatrix}, thus combining this with \eqref{DifferentialOFSatX} and \eqref{CompositionRuleForDifferntials}, $dS(X^\star)$, which we denote by $\hat{S}$ to simplify the notation, is given by
\begin{align*}
\hat{S}(dX)&=CdX-(dX)D\\
&\;\;\;\; -X^\star\mathcal{U}\big((dX)^TX^\star D+D(X^\star)^TdX\big).
\end{align*}
The  eigenvalues of $\mathbf{J}_S(X^\star)$ coincide with those of $dS(X^\star)=\hat{S}$, under the identification between Jacobians and differentials (see \cite{magnus2019matrix}); hence, we will study the eigenvalues of the latter.

 Let  $\hat{X}^\star$ be an extension of $X^\star$ to an orthonormal basis of eigenvectors of $C$, \emph{i.e.}, $(\hat{X}^\star)^T\hat{X}^\star=I_d$ and $C\hat{X}^\star=\hat{X}^\star \hat{D}$, where $\hat{D}$ is given by \eqref{ExtendedDiagonalEigenvavluesMatrix}. 
 To understand the eigenvalues of $\hat{S}$, consider the linear map given by  
\begin{align}\label{DifferentialChangeOfVariables}
V(dX) \to \hat{X}^\star dX
\end{align}
and observe that $V$ is an invertible linear map (in fact $V^{-1}(dX)=(\hat{X}^\star)^TdX$).
Eigenvalues are invariant under a similarity transformation, hence, the eigenvalues of $\hat{S}$ coincide with those of $V^{-1}\circ \hat{S}\circ V$ which, after renaming $dX$ by $W$, just amounts to the linear map
\begin{align*}
W \to \hat{D}W-WD-A\mathcal{U}(DA^TW+W^TAD),
\end{align*} with  $D$, $\hat{D}$, and $A$ given by, respectively, \eqref{DiagonalEigenvavluesMatrix}, \eqref{ExtendedDiagonalEigenvavluesMatrix}, and \eqref{AuxiliaryMatrixA}.
\end{proof}

In the proof of the following lemma it is crucial that the eigenvalues are in decreasing order.
\begin{lemma}
Let $D$, $\hat{D}$, and $A$ be defined, respectively,  by \eqref{DiagonalEigenvavluesMatrix}, \eqref{ExtendedDiagonalEigenvavluesMatrix}, and \eqref{AuxiliaryMatrixA}. The eigenvalues of the linear map from $\mathbb{R}^{d\times m}$ to $\mathbb{R}^{d\times m}$ defined by
\begin{align}\label{DSunderChangeOfVariables2}
W \to \hat{D}W-WD-A\mathcal{U}(DA^TW+W^TAD)
\end{align}
are real and negative.
\end{lemma}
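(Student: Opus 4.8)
The plan is to exploit the block structure of the linear map in \eqref{DSunderChangeOfVariables2}. I would write $W=\begin{bmatrix} W_1\\ W_2\end{bmatrix}$ with $W_1\in\mathbb{R}^{m\times m}$ and $W_2\in\mathbb{R}^{(d-m)\times m}$, and partition $\hat{D}=\begin{bmatrix} D & \mathbf{0}_{m,d-m}\\ \mathbf{0}_{d-m,m} & \tilde{D}\end{bmatrix}$ with $\tilde{D}=\mbox{diag}(\lambda_{m+1},\ldots,\lambda_d)$. Since $A^TW=W_1$, the argument of $\mathcal{U}$ equals the $m\times m$ matrix $DW_1+W_1^TD$, and $A\,\mathcal{U}(\cdots)$ feeds back only into the top block. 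Hence the map is block diagonal,
\begin{align*}
W\mapsto \begin{bmatrix} DW_1-W_1D-\mathcal{U}(DW_1+W_1^TD)\\ \tilde{D}W_2-W_2D\end{bmatrix},
\end{align*}
so its spectrum is the union of the spectra of $L_1:W_1\mapsto DW_1-W_1D-\mathcal{U}(DW_1+W_1^TD)$ on $\mathbb{R}^{m\times m}$ and $L_2:W_2\mapsto\tilde{D}W_2-W_2D$ on $\mathbb{R}^{(d-m)\times m}$, which I would analyze separately.

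The map $L_2$ is a Sylvester operator with diagonal coefficients, so the standard basis matrices are eigenvectors with eigenvalues $\lambda_{m+i}-\lambda_j$. Because the eigenvalues are ordered, $\lambda_{m+i}\le\lambda_{m+1}<\lambda_m\le\lambda_j$ for every $i\ge 1$, $j\le m$, so these eigenvalues are real and strictly negative. For $L_1$ I would compute the action entrywise: writing $M=DW_1+W_1^TD$, one has $M_{st}=\lambda_sW_{1,st}+\lambda_tW_{1,ts}$, and a short calculation gives $L_1(W_1)_{st}=(\lambda_s-\lambda_t)W_{1,st}$ for $s>t$, $L_1(W_1)_{ss}=-2\lambda_sW_{1,ss}$, and $L_1(W_1)_{st}=-\lambda_t(W_{1,st}+W_{1,ts})$ for $s<t$. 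Thus each diagonal coordinate is an eigenvector with eigenvalue $-2\lambda_s$, while each pair $\{(s,t),(t,s)\}$ with $s<t$ spans an invariant subspace on which $L_1$ acts through the matrix $\begin{bmatrix} -\lambda_t & -\lambda_t\\ 0 & \lambda_t-\lambda_s\end{bmatrix}$, whose eigenvalues are its diagonal entries $-\lambda_t$ and $\lambda_t-\lambda_s$.

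The crux — and the reason the statement stresses the decreasing order — is precisely this $2\times 2$ block. The below-diagonal entry evolves independently of the above-diagonal one, since $\mathcal{U}$ annihilates the lower triangle; this triangularity is what forces the eigenvalues to be \emph{real}, whereas a genuinely full $2\times 2$ coupling could produce a complex pair. Negativity then follows from $\lambda_t>0$ together with the \emph{strict} ordering $\lambda_1>\cdots>\lambda_m$, which gives $\lambda_t-\lambda_s<0$ for $s<t\le m$. Assembling the contributions $-2\lambda_s$, $-\lambda_t$, and $\lambda_t-\lambda_s$ from $L_1$ with the $\lambda_{m+i}-\lambda_j$ from $L_2$ shows every eigenvalue of \eqref{DSunderChangeOfVariables2} is real and negative. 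I expect the main obstacle to be purely organizational: carrying out the entrywise bookkeeping carefully enough that the triangular coupling of each off-diagonal pair becomes visible. Once that structure is exposed, positivity and strict ordering of the $\lambda_i$ close the argument immediately.
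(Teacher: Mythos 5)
Your proof is correct and follows essentially the same route as the paper's: the same block partition of $W$ into an $m\times m$ top block and a $(d-m)\times m$ bottom block, and the same entrywise identities, yielding exactly the eigenvalues the paper extracts, namely $\lambda_s-\lambda_t$ for $s>t$, the pair $-\lambda_t$ and $\lambda_t-\lambda_s$ from the triangular coupling of each off-diagonal pair $\{(s,t),(t,s)\}$, $-2\lambda_s$ on the diagonal, and the Sylvester eigenvalues $\lambda_{m+s}-\lambda_t$ from the bottom block. The only difference is organizational: you compute the full spectrum forward via invariant subspaces, whereas the paper fixes an arbitrary eigenvector and runs the same case analysis on its nonzero entries --- your explicit upper-triangular $2\times 2$ block is precisely the paper's step ``if $s<t$, then $\tilde{Z}_{ts}$ can be assumed to be $0$.''
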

\begin{proof}
Let $Z$ be an eigenvector (note that $Z$ is in fact a matrix) of \eqref{DSunderChangeOfVariables2} associated to the eigenvalue $\beta$, \emph{i.e.},
\begin{align}\label{EigenvEquationInVecForm}
\hat{D}Z-ZD-A\mathcal{U}(DA^TZ+Z^TAD)=\beta Z;
\end{align}
next, we  show that $\beta<0$.

Consider a block partition of $Z$ of the form
\begin{align*}
Z=\begin{bmatrix}
\tilde{Z}\\
\bar{Z}
\end{bmatrix},
\end{align*}
where $\tilde{Z}$ and  $\bar{Z}$ are, respectively, $m\times m$  and $(d-m)\! \times\! m$ matrices. The eigenvalue matrix equation \eqref{EigenvEquationInVecForm} induces the following system of matrix equations
\begin{align} \label{Two Eigenvalue equations1}
D\tilde{Z}-\tilde{Z}D-\mathcal{U}(D\tilde{Z}+\tilde{Z}^TD)&=\beta \tilde{Z},\\ \label{Two Eigenvalue equations2}
\bar{D}\bar{Z}-\bar{Z}D&=\beta \bar{Z},
\end{align}
where $\bar{D}=\mbox{diag}(\lambda_{m+1}, \ldots, \lambda_d)$.

There are two non-mutually-exclusive cases to consider: $\tilde{Z} \neq 0$ or $\bar{Z}\neq 0$ ($Z\neq 0$, by virtue of being an eigenvector).

\paragraph*{Case 1}Suppose  that $\bar{Z}_{st}\neq 0$. Then, \eqref{Two Eigenvalue equations2} implies that
\begin{align*}
\lambda_{m+s}\bar{Z}_{st}-\lambda_t\bar{Z}_{st}=\beta \bar{Z}_{st},
\end{align*}
and, hence, $\beta=\lambda_{m+s}-\lambda_t<0$.
\paragraph*{Case 2} Suppose that $\tilde{Z}_{st}\neq 0$. This case splits in two: either $s>t$ or $s \leq t$. If $s>t$, then \eqref{Two Eigenvalue equations1} and the ``upper triangularization'' operation yields
\begin{align} \label{Case 2 a)}
\lambda_s \tilde{Z}_{st}-\lambda_t\tilde{Z}_{st}=\beta \tilde{Z}_{st},
\end{align}
which, after dividing by $\tilde{Z}_{st}$, yields $\lambda_s-\lambda_t=\beta<0$. If $s \leq t$, then, 
\begin{align*}
\beta\tilde{Z}_{st}&=\lambda_s\tilde{Z}_{st}-\lambda_t\tilde{Z}_{st}-\mathcal{U}(D\tilde{Z}+\tilde{Z}^TD)_{st}\\
&=\lambda_s\tilde{Z}_{st}-\lambda_t\tilde{Z}_{st}-\lambda_s\tilde{Z}_{st}-\lambda_t\tilde{Z}_{ts}\\
&=-\lambda_t(\tilde{Z}_{st}+\tilde{Z}_{ts}).
\end{align*}
Next, notice that if $s<t$, then
$\tilde{Z}_{ts}$ can be assumed to be $0$, since, otherwise, we could deal with it as in \eqref{Case 2 a)} with the roles of $s$ and $t$ reversed to conclude $\beta<0$. Hence, assuming $\tilde{Z}_{ts}=0$, we obtain, after division by $\tilde{Z}_{st}$, that $\beta=-\lambda_t<0.$ Finally, if $s=t$, then $\beta=-2\lambda_t<0.$  
\end{proof}
\color{black}
\section{Parameter estimation with noisy measurements} \label{EM Section}
\subsection{Roadmap}

This is a rather long section, hence the need  for a road map. The analysis of \eqref{NoisyMeasurements} is simplified if the measurements are identically distributed besides being just independent and, therefore, we  start by introducing a probability distribution on  the vectors $h_n$ and a joint model on $(Y,H)$.

To estimate $\mu^\star$ ($p^\star$ and $\sigma^\star$ are treated as nuisance parameters), we  consider the stationary equations imposed by equating to zero the gradient of the log-likelihood function, a necessary condition satisfied by the maximum likelihood estimator (MLE). Once the particular form of the stationary equations is realized, we reformulate them as a fixed point equation of the form $g_1 \circ g_2(\theta^\star)=\theta^\star$ that naturally suggests the Banach-Picard iteration 
\begin{align*}
\theta^{k+1}=g_1\circ g_2(\theta^k).
\end{align*}

Observing that the map $g_1\circ g_2$  cannot be written as an average of local maps,  we switch to the map $H=g_2 \circ g_1$, which can be implicitly written as an average of local maps. With this map, we arrive at a distributed algorithm by considering the map $R$ (see section \ref{ReviewOfDistPic}) arising from $H$ and appealing to Algorithm 1.

Finally, we observe that the existence of a fixed point of $H=g_2 \circ g_1$ satisfying \eqref{JacobianCondition} follows from the existence of a fixed point of $g_1\circ g_2$ satisfying \eqref{JacobianCondition}. The final part of the section is thus devoted to verifying \eqref{JacobianCondition} for the map $g_1 \circ g_2$, and to a numerical simulation comparing Algorithm 1 and DA-DEM from \cite{pereira2018parameter}.

\subsection{Identically distributed observations}

Let $\theta^\star=(\mu^\star,p^\star,(\sigma^\star)^2) \in \Theta=\mathbb{R}^d\times (0,1)\times (0,+\infty)$ be an unknown and fixed vector which we term the ground truth.

The agents' measurements are assumed to be independent (see \eqref{NoisyMeasurements}); however, they are not identically distributed, given the presence of the vectors $h_n$ in \eqref{NoisyMeasurements}. To address this issue, let $Z \in \lbrace 0,1\rbrace$, $H \in \mathbb{R}^d$, and $Y \in \mathbb{R}$ be, respectively, a binary random variable, a random vector, and a real random variable. Suppose  the joint density on $(Y,H,Z)$ factors as
\begin{equation}\label{JointModel}
\begin{aligned}
f_{Y,H,Z}\big(y,h,z|\theta^\star\big)=f_{H}(h)f_Z(z|p^\star)f_{Y|H,Z}\big(y|h,z,\mu^\star,(\sigma^\star)^2 \big),
\end{aligned}
\end{equation}
where
\begin{equation*}
\begin{aligned}
f_H(h)&=\mathcal{N}(h|0,I_d),\\
f_Z(z|p^\star)&=(p^\star)^z(1-p^\star)^{1-z},\\
\end{aligned}
\end{equation*}
and
\begin{equation}
\begin{aligned}
f_{Y|H,Z}\big(y|h,z,\theta^\star \big)=\mathcal{N}\big(y|h^T\mu^\star, (\sigma^\star)^2\big)^z\mathcal{N}\big(y|0,(\sigma^\star)^2\big)^{1-z}.
\end{aligned}
\end{equation}

Instead of assuming that the $h_n$ are fixed as in \cite{pereira2018parameter}, we assume that each sensor $n$ has a measurement $(y_n,h_n)$, where $(y_n,h_n,z_n)$ was drawn from \eqref{JointModel}, but agent $n$ has no knowledge of $z_n$. After marginalization, the joint density of $Y,H$ is given by
\begin{equation}
\begin{aligned}\label{JointOnYandH}
&f_{Y,H}\big(y,h|\theta^\star)=f_H(h)f_{Y|H}(y|h,\theta^\star)\\
=&f_H(h)\Big(p^\star\mathcal{N}\big(y|h^T\mu^\star,(\sigma^\star)^2\big)+(1-p^\star)\mathcal{N}\big(y|0,(\sigma^\star)^2\big)\Big),
\end{aligned} 
\end{equation}
which is a mixture model \cite{bishop2006pattern}.

To estimate $\mu^\star$, the agents seek   $\theta\in  \Theta$ such that
\begin{equation}\label{MLE}
\frac{1}{N}\sum_{n=1}^N \nabla_\theta \phi (y_n,h_n,\theta)=0,
\end{equation}
where  $\phi$ is the log-likelihood of $(Y,H)$, \emph{i.e.},
\begin{equation}
\begin{aligned}
\phi(y,h,\theta)&=\log(f_{Y,H}(y,h|\theta)\\
&=\log \big(f_H(h)\big)+\log\big(f_{Y|H}(y|h,\theta)\big).
\end{aligned}
\end{equation}	
Since $f_H(h)$ does not depend on $\theta$, $$\nabla_\theta \phi(y,h,\theta)=\nabla_\theta \log \big(f_{Y|H}(y|h,\theta)\big);$$ in other words, \eqref{MLE} is a necessary condition satisfied by the MLE corresponding to the log-likelihood function  $\log f_{Y|H}(y|h,\theta)$, thus independent of $f_H$.
\subsection{Gradient of $\phi$ and the centralized algorithm}
Before  explicitly writing  the stationary equations corresponding to \eqref{MLE}, we introduce the \emph{responsibility functions} \cite{bishop2006pattern}, 
\begin{equation} \label{ResponsabilityFunction}
r(y,h,\theta)=\frac{p\mathcal{N}(y|h^T\mu,\sigma^2)}{p\mathcal{N}(y|h^T\mu,\sigma^2)+(1-p)\mathcal{N}(y|0,\sigma^2)}.
\end{equation}
Notice that $r(y,h,\theta)=\mathbb{P}(z=1|y,h,\theta)$, the posterior probability that the observation $y$ was not a result of measuring only noise. 

For reasons that will be clear later, the following set of equalities, which can be easily verified, will be convenient:
\begin{equation} \label{GradientOfPhi}
\begin{aligned}
\sigma^2\nabla_\mu \phi(y,h,\theta)&=r(y,h,\theta)(y-h^T\mu)h\\
p+p(1-p)\frac{\partial \phi}{\partial p}(y,h,\theta)&=r(y,h,\theta)\\
\sigma^2+2(\sigma^2)^2\frac{\partial \phi}{\partial \sigma^2}(y,h,\theta)&=
r(y,h,\theta)(y-h^T\mu)^2\\
&+\big(1-r(y,h,\theta)\big)y^2.
\end{aligned}
\end{equation}
Using \eqref{GradientOfPhi},  \eqref{MLE} can be explicitly written as
\begin{align}\label{Stability for mu}
\Big(\frac{1}{N}\sum_{n=1}^N\Gamma(y_n,h_n,\theta)\Big)\mu &=\frac{1}{N}\sum_{n=1}^N \psi(y_n,h_n,\theta)\\
p&=\frac{1}{N}\sum_{n=1}^N r(y_n,h_n,\theta)\\\label{Stability for sigma}
\sigma^2&=\frac{1}{N}\sum_{n=1}^N \gamma(y_n,h_n,\theta),
\end{align}
where 
\begin{align*}
\Gamma(y,h,\theta)&=r(y,h,\theta)hh^T,\\
\psi(y,h,\theta)&=r(y,h,\theta)yh,\\
\gamma(y,h,\theta)&=r(y,h,\theta)(y-h^T\mu)^2+\big(1-r(y,h,\theta)\big)y^2.
\end{align*}
If the matrix $\frac{1}{N}\sum_{n=1}^N \Gamma(y_n,h_n,\theta)$ is invertible, then \eqref{Stability for mu}-\eqref{Stability for sigma} can be written as a fixed point equation.\footnote{The invertibility of this matrix is assumed throughout the rest of the paper. In fact, if $N$ is sufficiently large - greater than $d$ - this happens with probability one.} This constitutes the motivation for the centralized algorithm that we suggest next (see Algorithm 2) and from which we will derive the distributed version; observe that it is the Banach-Picard iteration motivated by \eqref{Stability for mu}-\eqref{Stability for sigma}.

Another way to write  \eqref{Modified EM1}-\eqref{Modified EM3} (see Algorithm 2 below) is $\theta^{k+1}=g_1\circ g_2(\theta^k)$, where
\begin{align*}
g_2(\theta)=\frac{1}{N}\Big(&\sum_{n=1}^N \Gamma(y_n,h_n,\theta),\sum_{n=1}^N \psi(y_n,h_n,\theta),\\
&\sum_{n=1}^N r(y_n,h_n,\theta),\sum_{n=1}^N \gamma(y_n,h_n,\theta)\Big)
\end{align*}
and
\begin{align*}
g_1(\Gamma,\psi,p,\sigma^2)=&\big(\Gamma^{-1}\psi,p,\sigma^2\big).
\end{align*}

\begin{algorithm}

\caption{Centralized variant of EM}
\begin{algorithmic}
\scriptsize
\STATE Initialization:
  \begin{equation*}
\begin{aligned}
\theta^0=\big(\mu^0,p^0, (\sigma^0 )^2\big)\in \Theta
\end{aligned}
\end{equation*}
    \STATE Update: $\theta^{k+1}=\big(\mu^{k+1},p^{k+1},(\sigma^{k+1})^2\big)$, where \begin{align}\label{Modified EM1}
\mu^{k+1}&=\Big(\frac{1}{N}\sum_{n=1}^N\Gamma(y_n,h_n,\theta^k)\Big)^{-1}\frac{1}{N}\sum_{n=1}^N \psi(y_n,h_n,\theta^k)\\\label{Modified EM2}
p^{k+1}&=\frac{1}{N}\sum_{n=1}^N r(y_n,h_n,\theta^k)\\\label{Modified EM3}
(\sigma^{k+1})^2&=\frac{1}{N}\sum_{n=1}^N \gamma(y_n,h_n,\theta^k).
\end{align} 
\end{algorithmic}
\end{algorithm}

\subsection{Distributed Algorithm}
Although, $g_2$ is an average of local maps, the map $g_1\circ g_2$ is not, due to the  matrix inversion in \eqref{Modified EM1}. As a consequence, \eqref{Modified EM1}-\eqref{Modified EM3} cannot be directly extended  to a distributed algorithm. However,  switching the order of $g_1$ and $g_2$ results in a map that can be implicitly written as an average of local maps. In fact, instead of the iteration $\theta^{k+1}=g_1\circ g_2(\theta^k)$,   consider the iteration
\begin{align*}
z^{k+1}=H(z^k),
\end{align*}
where $H=g_2\circ g_1$ and $z=(\Gamma,\psi,p,\sigma^2)$.

Let
\begin{align*}
G_n(\theta)=\Big(&\Gamma\big(y_n,h_n,\theta\big),\psi\big(y_n,h_n,\theta\big),\\
&r\big(y_n,h_n,\theta\big),\gamma\big(y_n,h_n,\theta\big)\Big),
\end{align*}
and it follows that $H=\frac{1}{N}\sum_{n=1}^N H_n$, where
\begin{align} \label{EachAgentEM}
H_n(z)=G_n\circ g_1(z).
\end{align} 

To conclude,  the distributed algorithm we suggest amounts to Algorithm 1, with $R: \mathbb{R}^{(d^2+d+2)N}\to \mathbb{R}^{(d^2+d+2)N}$ defined, for $z=(z_1^T,\ldots,z_N^T)^T$, as in \eqref{Residuals}, and $H_n$ as in \eqref{EachAgentEM}. Additionally, following \cite{pereira2018parameter}, we suggest the initialization
\begin{align}\label{DistributedEmInitialization}
z_n^0=\sum_{m=1}^N \tilde{W}_{nm} G_m\big(\frac{y_nh_n}{h_n^Th_n},\frac{1}{2},\frac{y_n^2}{2}\big).
\end{align}
Some remarks are due: 
\begin{description}
\item[a)] The existence of a fixed point of $g_1 \circ g_2$ satisfying \eqref{JacobianCondition} is addressed in section \ref{EM Proof of convergence};
\item[b)] The existence of a fixed point of $g_2 \circ g_1$ satisfying \eqref{JacobianCondition} follows from the existence of a fixed point of $g_1 \circ g_2$ satisfying \eqref{JacobianCondition}, by the chain rule;
\item[c)] Expanding \eqref{Modified EM3} yields
\begin{equation*}
\begin{aligned}
(\sigma^{k+1})^2&=
\frac{1}{N}\sum_{n=1}^N r(y_n,h_n,\theta^k)(y_n-h_n^T\mu^k)^2\\
&+\big(1-r(y_n,h_n,\theta^k)\big)y_n^2,
\end{aligned}
\end{equation*}
and, if the update rule is modified according to
\begin{equation*}
\begin{aligned}
(\sigma^{k+1})^2&=\frac{1}{N}\sum_{n=1}^N r(y_n,h_n,\theta^k)(y_n-h_n^T\mu^{k+1})^2\\
&+\big(1-r(y_n,h_n,\theta^k)\big)y_n^2,
\end{aligned}
\end{equation*}
then,  a straightforward manipulation recovers the  EM algorithm presented in \cite{pereira2018parameter}. Moreover, the  EM algorithm derived in \cite{pereira2018parameter} is still amenable to a distributed implementation using Algorithm 1. However, we found it easier to prove \eqref{JacobianCondition} for this variant of EM, than for the standard EM.
\end{description}

\subsection{Convergence Analysis} \label{EM Proof of convergence}
The proof of local linear convergence of the centralized variant of EM, \emph{i.e.}, Algorithm 2,  is not trivial. In fact, this question is probabilistic in nature, because  updates \eqref{Modified EM1}-\eqref{Modified EM3}  depend on observations that are, in turn, samples from a probability distribution. Before presenting the main convergence result (Theorem \ref{FundamentalEmTheorem}), we need to introduce some definitions and only  one mild assumption that is instrumental in the proof of Lemma 4 below: the Fisher information at $\theta^\star$, given by,
\begin{align}\label{FisherInformationCondition}
\mbox{I}(\theta^\star)=\mathbb{E}_{\theta^\star}\Big[\nabla_\theta \phi(y,h,\theta^\star)\big(\nabla_\theta \phi(y,h,\theta^\star)\big)^T\Big],
\end{align}
is non-singular.

Let $T_N=g_1\circ g_2$ denote the map underlying the Banach-Picard iteration \eqref{Modified EM1}-\eqref{Modified EM3}.\footnote{The subscript $N$ emphasizes that $T_N$ depends on $N$ observations.} A straightforward manipulation, using \eqref{GradientOfPhi}, shows that
\begin{align*}
T_N(\theta)=\theta + \big(A_N(\theta)\big)^{-1}\frac{1}{N}\sum_{n=1}^N\nabla_\theta \phi(y_n,h_n,\theta),
\end{align*}
where
\begin{align*}
A_N(\theta)=\begin{bmatrix}
\frac{1}{N}\sum_{n=1}^N \frac{1}{\sigma^2}\Gamma(y_n,h_n,\theta)& \mathbf{0}&\mathbf{0}\\
\mathbf{0}&\frac{1}{p(1-p)}&0\\
\mathbf{0}&0&\frac{1}{2(\sigma^2)^2}
\end{bmatrix}.
\end{align*}

Before stating the main result, we introduce the ``infinite sample'' map, \emph{i.e.},
\begin{align*}
T(\theta)=\theta+\big(A(\theta)\big)^{-1}\mathcal{L}(\theta),
\end{align*}
where
\begin{align*}
A(\theta)=\begin{bmatrix}
\mathbb{E}_{\theta^\star}\Big[\frac{1}{\sigma^2}\Gamma(y,h,\theta)\Big]& \mathbf{0}&\mathbf{0}\\
\mathbf{0}&\frac{1}{p(1-p)}&0\\
\mathbf{0}&0&\frac{1}{2(\sigma^2)^2}
\end{bmatrix},
\end{align*}
and  $$\mathcal{L}(\theta)=\mathbb{E}_{\theta^\star}\big[\nabla \phi(y,h,\theta)\big].$$

The next lemma shows that $T$ is a ``natural'' map to consider.
 
\begin{lemma}
The ``infinite sample map'', \emph{i.e.}, $T$, has the following properties
\begin{description}
\item[a)] For fixed $\theta$, $T_N(\theta)$ converges in probability to $T(\theta)$;
\item[b)]  The ground truth $\theta^\star$ is a fixed point of $T$, \emph{i.e.} $T(\theta^\star)=\theta^\star$;
\item[c)] The attractor condition \eqref{JacobianCondition} holds for $T$ at $\theta^\star$, \emph{i.e.},
\begin{align*}
\rho\big(\mathbf{J}_T(\theta^\star)\big)<1.
\end{align*}
\end{description}
\end{lemma}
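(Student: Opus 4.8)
The plan is to handle the three claims in turn, dispatching (a) and (b) with standard likelihood theory and concentrating the work on the attractor condition (c). For part (a), I would note that the pairs $(y_n,h_n)$ are i.i.d.\ draws from the marginal \eqref{JointOnYandH}, so for fixed $\theta$ every empirical average entering $T_N$ is an average of i.i.d.\ terms; boundedness of the responsibility $r\in[0,1]$ together with the Gaussian tails of the model gives finite first moments, so the weak law of large numbers yields $\frac1N\sum_n\nabla_\theta\phi(y_n,h_n,\theta)\to\mathcal L(\theta)$ and $\frac1N\sum_n\frac{1}{\sigma^2}\Gamma(y_n,h_n,\theta)\to$ the $(1,1)$ block of $A(\theta)$ in probability, the other blocks of $A_N$ being deterministic. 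Since matrix inversion is continuous at the invertible matrix $A(\theta)$, the continuous mapping theorem gives $T_N(\theta)\to T(\theta)$ in probability. For part (b), because $\phi=\log f_{Y,H}$ differs from $\log f_{Y|H}$ only by the $\theta$-free term $\log f_H$, the vector $\nabla_\theta\phi(\cdot,\cdot,\theta^\star)$ is the score of the conditional model, whose expectation under $\theta^\star$ vanishes (the first Bartlett identity, obtained by differentiating $\int f_{Y|H}\,dy=1$ under the integral sign, which is licit for this Gaussian mixture); hence $\mathcal L(\theta^\star)=0$ and $T(\theta^\star)=\theta^\star+A(\theta^\star)^{-1}\mathcal L(\theta^\star)=\theta^\star$.

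The substance is part (c). Differentiating $T(\theta)=\theta+A(\theta)^{-1}\mathcal L(\theta)$ and using $\mathcal L(\theta^\star)=0$ from (b), the contribution of the derivative of $A(\theta)^{-1}$ drops out, leaving
\begin{align*}
\mathbf J_T(\theta^\star)=I_{d+2}+A(\theta^\star)^{-1}\mathbf J_{\mathcal L}(\theta^\star)=I_{d+2}-A(\theta^\star)^{-1}\mbox{I}(\theta^\star),
\end{align*}
where the last equality is the second (information-matrix) Bartlett identity $\mathbf J_{\mathcal L}(\theta^\star)=\mathbb E_{\theta^\star}[\nabla^2_\theta\phi(y,h,\theta^\star)]=-\mbox{I}(\theta^\star)$. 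The whole question therefore reduces to locating the spectrum of $A(\theta^\star)^{-1}\mbox{I}(\theta^\star)$.

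The key fact I would establish is that $A(\theta^\star)$ is exactly the complete-data Fisher information $\mbox{I}_c(\theta^\star)=\mathbb E_{\theta^\star}[-\nabla^2_\theta\log f_{Y,H,Z}(y,h,z,\theta^\star)]$. This is a block-by-block computation from \eqref{JointModel}: the $p$-block is $1/(p^\star(1-p^\star))$; for the $\mu$-block one uses $z\perp h$ and $r(y,h,\theta^\star)=\mathbb P(z=1\mid y,h)$, whence $\mathbb E_{\theta^\star}[\frac{1}{(\sigma^\star)^2}\,r\,hh^T]=\frac{p^\star}{(\sigma^\star)^2}I_d=\mathbb E_{\theta^\star}[\frac{z}{(\sigma^\star)^2}hh^T]$; the $\sigma^2$-block is $1/(2(\sigma^\star)^4)$; and the $\mu$--$\sigma^2$ cross term vanishes since $\mathbb E[z\,w\,h]=0$. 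With $A(\theta^\star)=\mbox{I}_c(\theta^\star)$, the missing-information principle (the conditional-variance decomposition of the complete-data score) gives $\mbox{I}_c(\theta^\star)=\mbox{I}(\theta^\star)+\mbox{I}_m(\theta^\star)$ with $\mbox{I}_m(\theta^\star)\succeq0$, so $A(\theta^\star)\succeq\mbox{I}(\theta^\star)\succ0$, positivity coming from the nonsingularity assumption \eqref{FisherInformationCondition}. As both matrices are symmetric positive definite, $A(\theta^\star)^{-1}\mbox{I}(\theta^\star)$ is similar to $A(\theta^\star)^{-1/2}\mbox{I}(\theta^\star)A(\theta^\star)^{-1/2}$, whose Rayleigh quotient is at most $1$ (from $\mbox{I}\preceq A$) and bounded below by a positive constant (from $\mbox{I}\succ0$); its eigenvalues thus lie in $(0,1]$, so those of $\mathbf J_T(\theta^\star)=I_{d+2}-A(\theta^\star)^{-1}\mbox{I}(\theta^\star)$ lie in $[0,1)$ and $\rho(\mathbf J_T(\theta^\star))<1$.

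The main obstacle is precisely this identification $A(\theta^\star)=\mbox{I}_c(\theta^\star)$ and the induced ordering $A(\theta^\star)\succeq\mbox{I}(\theta^\star)$: the Bartlett identities and the missing-information principle require justifying differentiation under the integral sign for the mixture, and the block computations, though elementary, are where the specific Gaussian structure of \eqref{JointModel} is essential. Everything in (c) hinges on recognizing $T$ as a Fisher-scoring iteration driven by the complete-data information and on the resulting complete-versus-observed information comparison.
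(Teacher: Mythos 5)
Your proposal is correct and follows essentially the same route as the paper: part a) via the weak law of large numbers, part b) as a direct score-identity verification, and part c) by establishing $\mathbf{J}_T(\theta^\star)=I-\big(\mbox{I}_c(\theta^\star)\big)^{-1}\mbox{I}(\theta^\star)$ through the identification $A(\theta^\star)=\mbox{I}_c(\theta^\star)$ and the missing-information principle $0\prec \mbox{I}(\theta^\star)\preceq \mbox{I}_c(\theta^\star)$. The only difference is cosmetic: where the paper cites Theorem 7.7.3 of Horn and Johnson for the final spectral bound, you prove that fact inline via the symmetrization $A(\theta^\star)^{-1/2}\mbox{I}(\theta^\star)A(\theta^\star)^{-1/2}$, and you spell out the Bartlett identities and block computations that the paper compresses into ``a simple calculation.''
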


\begin{proof}

The proof of b) amounts to a straightforward verification and the proof of a) follows by the weak law of large numbers, hence, we will focus on the proof of c) which relies on the \emph{principle of missing information} (see \cite{mclachlan2007algorithm}, page 101), and the assumption on the Fisher Information condition, \emph{i.e.},  \eqref{FisherInformationCondition}.

Under suitable regularity conditions (see appendix \ref{RegularityConditionSect}) that hold for the model \eqref{JointOnYandH}, $$\mathbb{E}_{\theta^\star}\Big[\nabla^2_\theta \phi(y,h,\theta^\star)\Big]=-\mbox{I}(\theta^\star).$$

Additionally, a simple calculation reveals that $A(\theta^\star)$ coincides with the Fisher information of the complete data model \eqref{JointModel}, \emph{i.e.,}$$A(\theta^\star)=\mbox{I}_c(\theta^\star).$$ The non-singularity assumption on $\mbox{I}(\theta^\star)$ (see \eqref{FisherInformationCondition}), together with the principle of missing information (see \cite{mclachlan2007algorithm}, page 101), implies that $$0\prec \mbox{I}(\theta^\star) \preceq \mbox{I}_c(\theta^\star).$$ All these observations show that
\begin{align} \label{VAriantEMPopulationEMContraction}
\mathbf{J}_T(\theta^\star)=I-\big(\mbox{I}_c(\theta^\star)\big)^{-1}\mbox{I}(\theta^\star),
\end{align}
and, Theorem 7.7.3 of \cite{horn2012matrix}, together with $0\prec \mbox{I}(\theta^\star) \preceq \mbox{I}_c(\theta^\star)$, implies that
\begin{align*}
\rho\big(\mathbf{J}_T(\theta^\star)\big)<1,
\end{align*}
concluding that $\theta^\star$ is an attractor of $T$.
\end{proof}
We recall that the goal of this section is to show that the probability that $T_N$ has an attractor  approaches $1$ as $N$ tends to infinity; the strategy is to derive this from the existence of an attractor of $T$, \emph{i.e.}, c) in Lemma 4. Pointwise convergence in probability, \emph{i.e.}, a) in Lemma 4, is  not enough to arrive at this result. In fact, the proof is built on a stronger notion that is a probabilistic version of \emph{uniform}, rather than pointwise, convergence of maps. This is the content of Theorem \ref{CrucialLemma} below.

Observe that if $\theta_N$ is a fixed point of $T_N$, then 
\begin{align*}
\mathbf{J}_{T_N}(\theta_N)=I+\big(A_N(\theta_N)\big)^{-1}\frac{1}{N}\sum_{n=1}^N \nabla_\theta^2\phi(y_n,h_n,\theta_N),
\end{align*}
so let
\begin{align*}
T_N^\prime(\theta)&=I+\big(A_N(\theta)\big)^{-1}\frac{1}{N}\sum_{n=1}^N \nabla_\theta^2\phi(y_n,h_n,\theta),\\
T^\prime(\theta)&=I+\big(A(\theta)\big)^{-1}\mathbb{E}_{\theta^\star}\Big[\nabla^2_\theta\phi(y,h,\theta)\Big].
\end{align*}

\begin{rem} Note that the maps $T_N^\prime(\theta)$ and $T^\prime(\theta)$ only coincide with the jacobian maps $\mathbf{J}_{T_N}(\theta)$ and $\mathbf{J}_T(\theta)$ at fixed points.
\end{rem}

The uniform convergence in probability is expressed in the next theorem, whose proof can be found in appendix \ref{ProofOfCrucialLemma}. For the statement, recall (see the notation section) that $\bar{B}^{\|\cdot\|}_{\delta,\theta^\star}$ is the closed ball of center $\theta^\star$ and radius $\delta$, with respect to the metric induced by the norm $\|\cdot\|$.

\begin{theorem}

\label{CrucialLemma} Let $\delta>0$ and $\|\cdot\|$ be any norm. With $T_N$, $T_N^\prime$, $T^\prime$, and $T$ as defined above, 
\begin{align}\label{CrucialTheoremFirstClaim}
&\sup_{\theta \in \bar{B}^{\|\cdot\|}_{\delta,\theta^\star}}\big\|T_N(\theta)-T(\theta)\big\|\to 0\\ \label{CrucialTheoremSecondClaim}
&\sup_{\theta \in \bar{B}^{\|\cdot\|}_{\delta,\theta^\star}}\big\|T_N^\prime(\theta)-T^\prime(\theta)\big\|\to 0
\end{align}
both in probability, as $N \to \infty$.
\end{theorem}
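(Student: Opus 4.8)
The plan is to reduce both uniform limits to a collection of \emph{uniform laws of large numbers} (ULLN) for the integrands appearing in $T_N$ and $T_N'$, and then to recombine them by elementary matrix algebra, exploiting that the population matrix $A(\theta)$ is uniformly invertible on the ball. Throughout I work on $K:=\bar{B}^{\|\cdot\|}_{\delta,\theta^\star}\cap\Theta$, the set where $T_N,T,T_N',T'$ are actually defined; for the values of $\delta$ relevant to the application $K$ is a compact subset of $\Theta$ bounded away from $\{\sigma^2=0\}\cup\{p\in\{0,1\}\}$, which I assume, so that on $K$ the factors $1/\sigma^2$, $1/(p(1-p))$, and $1/(\sigma^2)^2$ are bounded. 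Writing $L_N(\theta)=\frac1N\sum_n\nabla_\theta\phi(y_n,h_n,\theta)$ and $\mathcal L(\theta)=\mathbb E_{\theta^\star}[\nabla_\theta\phi(y,h,\theta)]$, and using the identity $A_N^{-1}-A^{-1}=A_N^{-1}(A-A_N)A^{-1}$, I decompose
\begin{align*}
T_N-T = A_N^{-1}(L_N-\mathcal L)+A_N^{-1}(A-A_N)A^{-1}\mathcal L,
\end{align*}
and analogously for $T_N'-T'$ with $\nabla^2_\theta\phi$ replacing $\nabla_\theta\phi$. Hence it suffices to prove: (i) $\sup_{\theta\in K}\|L_N-\mathcal L\|\to0$ and the analogous statement for the Hessian averages; (ii) $\sup_{\theta\in K}\|A_N-A\|\to0$ (only the $\frac{1}{\sigma^2}\Gamma$ block of $A_N$ is random); and (iii) the uniform bounds $\sup_K\|A^{-1}\|<\infty$, $\sup_K\|\mathcal L\|<\infty$, together with $\sup_K\|A_N^{-1}\|$ bounded in probability.

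The bound on $A_N^{-1}$ follows from (ii) once $A$ is uniformly invertible, since uniform closeness of $A_N$ to an invertible $A$ forces $A_N$ to be invertible with uniformly bounded inverse on an event of probability tending to $1$. The uniform invertibility of $A$ is short: $A(\theta)$ is block diagonal, the scalar blocks $\frac{1}{p(1-p)}$ and $\frac{1}{2(\sigma^2)^2}$ are continuous and bounded away from $0$ and $\infty$ on $K$, and the matrix block $\mathbb E_{\theta^\star}[\frac{1}{\sigma^2}r(y,h,\theta)hh^T]$ is positive definite for each $\theta\in K$ (since $r>0$ and $h\sim\mathcal N(0,I_d)$ gives $\mathbb E[hh^T]=I_d\succ0$) and continuous in $\theta$, so by compactness its smallest eigenvalue is bounded below by a positive constant on $K$.

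The core of the proof is the ULLN in (i)--(ii). I would invoke the standard result (Jennrich-type uniform law of large numbers): if $g(y,h,\theta)$ is measurable in $(y,h)$, continuous in $\theta\in K$ for a.e.\ $(y,h)$, and admits an integrable \emph{envelope} $\sup_{\theta\in K}\|g(y,h,\theta)\|\le G(y,h)$ with $\mathbb E_{\theta^\star}[G]<\infty$, then $\sup_{\theta\in K}\|\frac1N\sum_n g(y_n,h_n,\theta)-\mathbb E_{\theta^\star}[g(y,h,\theta)]\|\to0$ almost surely, hence in probability. Continuity in $\theta$ is immediate from the closed forms in \eqref{GradientOfPhi} and the smoothness of the responsibility \eqref{ResponsabilityFunction} on $K$, so everything reduces to producing integrable envelopes for the three integrands $\nabla_\theta\phi$, $\frac{1}{\sigma^2}\Gamma$, and $\nabla^2_\theta\phi$.

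The main obstacle is exactly this envelope verification, and it is where I would spend the effort. For each integrand I would bound its $\sup_{\theta\in K}$-norm by a fixed polynomial in $\|h\|$ and $|y|$: the key simplifications are that $r\in[0,1]$ and $1-r\in[0,1]$ are bounded, and on $K$ the factors $1/\sigma^2$, $1/(p(1-p))$, $1/(\sigma^2)^2$ are bounded by constants, so using \eqref{GradientOfPhi} the quantities $r(y-h^T\mu)h$, $rhh^T$, and the second-derivative terms are all dominated by polynomials of the form $c(1+\|h\|^2)(1+|y|^2)$ (with degree depending on the term). Since under $\theta^\star$ we have $h\sim\mathcal N(0,I_d)$ and $y\mid h$ is the two-component Gaussian mixture \eqref{JointOnYandH}, every mixed polynomial moment $\mathbb E_{\theta^\star}[\|h\|^a|y|^b]$ is finite, yielding the required envelopes $G$; the Hessian case is the most laborious, since $\nabla^2_\theta\phi$ contains products of responsibilities with $(y-h^T\mu)^2hh^T$ together with cross terms in $p$ and $\sigma^2$, but each is again of polynomial type and hence integrable. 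With (i)--(iii) in hand, each summand in the decomposition of $T_N-T$ (resp.\ $T_N'-T'$) is a product of a factor bounded in probability with a factor tending uniformly to $0$ in probability, so the supremum of the product tends to $0$ in probability, establishing \eqref{CrucialTheoremFirstClaim} and \eqref{CrucialTheoremSecondClaim}.
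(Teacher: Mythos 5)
Your proof is correct and follows essentially the same route as the paper's: the same resolvent-type decomposition of $T_N-T$ and $T_N'-T'$ into a term driven by $A_N-A$ and a term driven by the empirical average minus its expectation, the same Newey--McFadden/Jennrich uniform law of large numbers with polynomial envelopes justified by $r\in[0,1]$ and the boundedness of $1/\sigma^2$ and $1/(p(1-p))$ on the compact ball (the paper's Appendix~\ref{RegularityConditionSect}), and the same recombination of an $o_P(1)$ factor with factors bounded in probability. If anything, you supply more detail than the paper at one point, spelling out the uniform invertibility of $A(\theta)$ on the ball and the resulting bound in probability on $\sup_\theta\big\|\big(A_N(\theta)\big)^{-1}\big\|$, a step the paper states with proof omitted.
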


We  now state the main convergence result.

\

\begin{theorem}\label{FundamentalEmTheorem}
There exists $\delta>0$  and a norm $\|\cdot\|$ such that
\begin{align*}
\mathbb{P}_{\theta^\star}\Big(\sup_{\theta \in \bar{B}^{\|\cdot\|}_{\delta,\theta^\star}}\big\|T_N(\theta)-\theta^\star\|\leq \delta \Big)&\to 1 \quad \text{and}\\
\mathbb{P}_{\theta^\star}\Big(\sup_{\theta \in \bar{B}^{\|\cdot\|}_{\delta,\theta^\star}}\big\|T_N^\prime(\theta)\|< 1 \Big)&\to 1,
\end{align*}
where $\|T_N^\prime(\theta)\|$ is the induced matrix norm.\footnote{The measurability of the maps in this Theorem are a consequence of Proposition 7.32 in \cite{bertsekas2004stochastic}.}
\end{theorem}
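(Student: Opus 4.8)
The plan is to treat the statement as a standard ``perturbation of a contraction'' argument, in which all of the genuine probabilistic difficulty has already been absorbed into the uniform convergence of Theorem~\ref{CrucialLemma}; what remains is to fix the right norm and radius on the deterministic limiting maps $T$ and $T^\prime$, and then transfer the resulting bounds to $T_N$ and $T_N^\prime$ via the triangle inequality. First I would fix the norm. By part c) of Lemma~4 we have $\rho\big(\mathbf{J}_T(\theta^\star)\big)<1$, and since $\theta^\star$ is a fixed point of $T$ (part b), the Remark gives $\mathbf{J}_T(\theta^\star)=T^\prime(\theta^\star)$. Because the spectral radius equals the infimum over all induced matrix norms, I can choose a vector norm $\|\cdot\|$ whose induced matrix norm satisfies $\|T^\prime(\theta^\star)\|=\kappa_0<1$; this is the norm claimed in the statement.

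Next I would choose $\delta$. Under the regularity conditions on the model \eqref{JointOnYandH}, the maps $\theta\mapsto T^\prime(\theta)$ and $\theta\mapsto\mathbf{J}_T(\theta)$ are continuous, and both take the value $T^\prime(\theta^\star)=\mathbf{J}_T(\theta^\star)$, of induced norm $\kappa_0<1$, at $\theta^\star$. Writing $\bar B:=\bar B^{\|\cdot\|}_{\delta,\theta^\star}$, I can therefore pick $\delta>0$ small enough that
\[
\sup_{\theta\in\bar B}\|T^\prime(\theta)\|\le\kappa_1<1
\qquad\text{and}\qquad
\sup_{\theta\in\bar B}\|\mathbf{J}_T(\theta)\|\le\kappa_2<1
\]
for constants $\kappa_1,\kappa_2\in(0,1)$. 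Since $\bar B$ is convex and $T(\theta^\star)=\theta^\star$, the second bound together with the mean value inequality yields $\|T(\theta)-\theta^\star\|\le\kappa_2\|\theta-\theta^\star\|\le\kappa_2\delta$ for every $\theta\in\bar B$, hence $\sup_{\theta\in\bar B}\|T(\theta)-\theta^\star\|\le\kappa_2\delta$, strictly below $\delta$ with slack $(1-\kappa_2)\delta>0$.

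The two claims then follow by splitting with the triangle inequality and invoking Theorem~\ref{CrucialLemma}. For the first,
\[
\sup_{\theta\in\bar B}\|T_N(\theta)-\theta^\star\|\le\sup_{\theta\in\bar B}\|T_N(\theta)-T(\theta)\|+\sup_{\theta\in\bar B}\|T(\theta)-\theta^\star\|\le\varepsilon_N+\kappa_2\delta,
\]
where $\varepsilon_N:=\sup_{\theta\in\bar B}\|T_N(\theta)-T(\theta)\|\to0$ in probability by \eqref{CrucialTheoremFirstClaim}; on the event $\{\varepsilon_N\le(1-\kappa_2)\delta\}$, whose probability tends to $1$, the supremum is $\le\delta$. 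For the second,
\[
\sup_{\theta\in\bar B}\|T_N^\prime(\theta)\|\le\sup_{\theta\in\bar B}\|T_N^\prime(\theta)-T^\prime(\theta)\|+\sup_{\theta\in\bar B}\|T^\prime(\theta)\|\le\varepsilon_N^\prime+\kappa_1,
\]
with $\varepsilon_N^\prime\to0$ in probability by \eqref{CrucialTheoremSecondClaim}; on the event $\{\varepsilon_N^\prime<1-\kappa_1\}$, whose probability tends to $1$, the supremum is $<1$. Because the theorem asserts two separate probability limits with a common $\delta$ and norm, no intersection of events is required.

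I expect the main obstacle to be conceptual bookkeeping rather than hard analysis: the delicate point is the one flagged by the Remark, namely that $T^\prime$ and $T_N^\prime$ are \emph{not} the Jacobians of $T$ and $T_N$ away from fixed points. Consequently the ``into-itself'' estimate must be driven by the true Jacobian $\mathbf{J}_T$ (through the mean value inequality), whereas the contraction-of-the-derivative estimate is driven by $T^\prime$, and one must confirm that a single norm and a single $\delta$ serve both — which is possible precisely because $\mathbf{J}_T(\theta^\star)=T^\prime(\theta^\star)$ makes the two deterministic maps agree and have induced norm below one at $\theta^\star$. The genuine heavy lifting, namely the probabilistic uniform convergence \eqref{CrucialTheoremFirstClaim}--\eqref{CrucialTheoremSecondClaim}, is assumed from Theorem~\ref{CrucialLemma}, and measurability of the suprema is handled by the footnote to the statement.
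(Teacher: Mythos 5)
Your proposal is correct and follows essentially the same route as the paper: fix a norm in which $T^\prime(\theta^\star)=\mathbf{J}_T(\theta^\star)$ has induced norm below one, pick $\delta$ so that $T$ maps $\bar{B}^{\|\cdot\|}_{\delta,\theta^\star}$ strictly into itself with slack and $\sup_{\theta\in\bar{B}^{\|\cdot\|}_{\delta,\theta^\star}}\|T^\prime(\theta)\|\leq\beta<1$, then transfer both bounds to $T_N$ and $T_N^\prime$ by the triangle inequality and Theorem~\ref{CrucialLemma}. The only (minor) difference is that you derive the local contraction of $T$ by hand — Householder's characterization of the spectral radius as the infimum of induced norms, plus the mean value inequality, which requires continuity of $\mathbf{J}_T$ near $\theta^\star$ (available here from the regularity conditions) — whereas the paper imports exactly this norm construction and contraction estimate from the proof of Ostrowski's theorem in \cite{ortega2000iterative}, which needs only differentiability at $\theta^\star$ itself.
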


\

Before presenting the proof, we explain why Theorem \ref{FundamentalEmTheorem} encapsulates the notion that, with probability approaching $1$, the map $T_N$ has an attractor. Let
\begin{align*}
\mathcal{A}_N&=\big\lbrace (\mathbf{y},\mathbf{h}) \in \mathbb{R}^N\times \mathbb{R}^{dN}: \sup_{\theta \in \bar{B}^{\|\cdot\|}_{\delta,\theta^\star}}\big\|T_N(\theta)-\theta^\star\|\leq \delta\big\rbrace\\
\mathcal{B}_N&=\big\lbrace (\mathbf{y},\mathbf{h}) \in \mathbb{R}^N\times \mathbb{R}^{dN}: \sup_{\theta \in \bar{B}^{\|\cdot\|}_{\delta,\theta^\star}}\big\|T_N^\prime(\theta)\|< 1\big\rbrace.
\end{align*}

\begin{rem} Informally, observe that the set $\mathcal{A}_N$ is the set of ``samples'' where the ball $\bar{B}_{\delta,\theta^\star}^{\|\cdot\|}$ is invariant under $T_N$, i.e, $$T_N\big(\bar{B}_{\delta,\theta^\star}^{\|\cdot\|}\big)\subseteq \bar{B}_{\delta,\theta^\star}^{\|\cdot\|},$$ and that the set $\mathcal{B}_N$ is, from remark 2, the set of ``samples''  where the Jacobian of $T_N$ satisfies \eqref{JacobianCondition} at a fixed points. By noting that a continuous map  from a convex compact space into itself has a fixed point (Brouwer's fixed point theorem), it follows that if $(\mathbf{y},\mathbf{h})$ is in $\mathcal{A}_N$, then $T_N$ has a fixed point. Moreover, if $(\mathbf{y},\mathbf{h})$ is in $\mathcal{A}_N \cap \mathcal{B}_N$ then $T_N$ has a fixed point satisfying \eqref{JacobianCondition}. All of this is made precise below.
\end{rem}

The statement of Theorem \ref{FundamentalEmTheorem} is that the (non-random sequences) $\mathbb{P}_{\theta^\star}(\mathcal{A}_N)$ and $\mathbb{P}_{\theta^\star}(\mathcal{B}_N)$ both tend to $1$. The  inequalities
\begin{align*}
\mathbb{P}_{\theta^\star}(\mathcal{A}_N)+\mathbb{P}_{\theta^\star}(\mathcal{B}_N)-1 &\leq \mathbb{P}_{\theta^\star}(\mathcal{A}_N \cap\mathcal{B}_N)\leq \mathbb{P}_{\theta^\star}(\mathcal{A}_N)
\end{align*}
imply that 
\begin{align*}
\mathbb{P}_{\theta^\star}(\mathcal{A}_N \cap\mathcal{B}_N)\to 1.
\end{align*}
Now note that, if both inequalities hold, namely

\begin{align}\label{BrouwersInequality}
&\sup_{\theta \in \bar{B}^{\|\cdot\|}_{\delta,\theta^\star}}\big\|T_N(\theta)-\theta^\star\|\leq \delta\\ \label{JacobianOFBrouewer}
&\sup_{\theta \in \bar{B}^{\|\cdot\|}_{\delta,\theta^\star}}\big\|T_N^\prime(\theta)\|< 1,
\end{align}
then \eqref{BrouwersInequality}, together with Brouwer's fixed point theorem (see \cite{borwein2010convex}, page 180) implies that $T_N$ has a fixed point $\theta_N$ in $\bar{B}^{\|\cdot\|}_{\delta,\theta^\star}$ (this idea is loosely inspired by \cite{van2000asymptotic}, page 69). Moreover, being a fixed point, at a $\theta_N$ it holds (see remark 2) that $T^\prime_N(\theta_N)= \mathbf{J}_{T_N}(\theta_N)$, so, \eqref{JacobianOFBrouewer} implies that \begin{align*}
\rho\big(\mathbf{J}_{T_N}(\theta_N)\big)\leq \big\|\mathbf{J}_{T_N}(\theta_N)\|\leq \sup_{\theta \in \bar{B}^{\|\cdot\|}_{\delta,\theta^\star}}\big\|T_N^\prime(\theta)\|< 1.
\end{align*}
This explains why Theorem \ref{FundamentalEmTheorem} expresses the notion that we can ``expect'' \eqref{JacobianCondition} to hold for $T_N$. In fact, from the above, the event
$$
\mathcal{C}_N=\big\lbrace (\mathbf{y},\mathbf{h}) : T_N \text{ has a fixed point satisfying \eqref{JacobianCondition}}\big\rbrace \\
$$ contains the event $\mathcal{A}_N \cap \mathcal{B}_N$, and the probability of this last event is approaching 1.
\subsubsection*{Proof of Theorem \ref{FundamentalEmTheorem}}
Let $\|\cdot\|$ be any norm. Then
\begin{align}\label{FirstInequalityOfProof}
\|T_N(\theta)-\theta^\star\| \leq \|T_N(\theta)-T(\theta)\|+\|T(\theta)-\theta^\star\|.
\end{align}
From Lemma 4 c),
\begin{align*}
\rho\big(\mathbf{J}_T(\theta^\star)\big)<1.
\end{align*}

From the proof of Ostrowski's Theorem (see  \cite{ortega2000iterative}, page 300), there exists a norm $\|\cdot\|$ on $\mathbb{R}^{d+2}$, an open neighborhood $\mathcal{V}$ of $\theta^\star$, and $\lambda<1$, such that
\begin{description}
\item[1)] $\|T(\theta)-\theta^\star\| \leq \lambda\|\theta-\theta^\star\|$, for $\theta \in \mathcal{V}$;
\item[2)] $\|\mathbf{J}_T(\theta^\star)\|<1$, where here the norm is the induced matrix norm.
\end{description}
Choose $\delta$ sufficiently small such that 
\begin{description}
\item[i)] $ \bar{B}^{\|\cdot\|}_{\delta,\theta^\star} \subseteq \mathcal{V}$;
\item[ii)] $\|T^\prime(\theta)\|=\|\mathbf{J}_T(\theta)\|\leq \beta<1$, for $\theta \in \bar{B}^{\|\cdot\|}_{\delta,\theta^\star}$,
\end{description}
where the validity of ii) follows from the compactness of $\bar{B}^{\|\cdot\|}_{\delta,\theta^\star}$  and the continuity of $T^\prime$.

Now, for any $\theta \in \bar{B}^{\|\cdot\|}_{\delta,\theta^\star}$, \eqref{FirstInequalityOfProof} implies that
\begin{align*}
\|T_N(\theta)-\theta^\star\| \leq \|T_N(\theta)-T(\theta)\|+\lambda \delta
\end{align*}
and, hence,
\begin{align}\label{Inequlity1}
\sup_{\theta \in \bar{B}^{\|\cdot\|}_{\delta,\theta^\star}}\big\|T_N(\theta)-\theta^\star\big\|\leq \sup_{\theta \in \bar{B}^{\|\cdot\|}_{\delta,\theta^\star}}\big\|T_N(\theta)-T(\theta)\big\|+\lambda\delta.
\end{align}
A similar reasoning shows that
\begin{align} \label{Inequlity2}
\sup_{\theta \in \bar{B}^{\|\cdot\|}_{\delta,\theta^\star}}\big\|T_N^\prime(\theta)\|\leq \sup_{\theta \in \bar{B}^{\|\cdot\|}_{\delta,\theta^\star}}\big\|T_N^\prime(\theta)-T^\prime(\theta)\big\|+\beta.
\end{align}

To conclude, we appeal to Theorem \ref{CrucialLemma}, and we show that it implies the result.  Let $\epsilon_1=(1-\lambda)\delta$ and $\epsilon_2=\frac{1-\beta}{2}$. From the properties of $\lambda$ and $\beta$, it holds that $\epsilon_1>0$ and $0<\epsilon_2<1$. By the definition of convergence in probability, it holds that
\begin{align*}
&\mathbb{P}_{\theta^\star}\Big(\sup_{\theta \in \bar{B}^{\|\cdot\|}_{\delta,\theta^\star}}\big\|T_N(\theta)-T(\theta)\|\leq \epsilon_1\Big)\to 1\\
&\mathbb{P}_{\theta^\star}\Big(\sup_{\theta \in \bar{B}^{\|\cdot\|}_{\delta,\theta^\star}}\big\|T_N^\prime(\theta)-T^\prime(\theta)\|\leq \epsilon_2\Big)\to 1.
\end{align*}
From \eqref{Inequlity1}, \eqref{Inequlity2}, and the forms of $\epsilon_1$ and $\epsilon_2$, we conclude the result.

\subsection{Simulations}
In this section, we compare our algorithm with the one from \cite{pereira2018parameter} (DA-DEM) through  Monte Carlo simulations. The parameters generated once and fixed throughout all Monte Carlo runs were:  $d=3$, $N=100$, a unit-norm vector  $\mu^\star \in \mathbb{R}^d$, $p^\star=0.7$, and an undirected connected graph on $N$ nodes with connectivity radius $r_c= 0.18$ \footnote{$N$ points were randomly deployed on the unit square; two points were then  connected by an edge if their distance was less than $r_c$.}.

\

Each Monte Carlo run consisted in 
\begin{description}
\item[1)] Generating a data set: each $h_n$ was independently sampled from a Gaussian with zero mean and covariance $I_3$; the variance of the noise $(\sigma^\star)^2$ was generated according to
\begin{align*}
(\sigma^\star)^2=\frac{\|\mathbf{H}\|_F^2}{N \times \mbox{SNR}},
\end{align*}
with $\mathbf{H}^T=[h_1\ldots h_N]$ and where $\text{SNR}$ is the signal to noise ratio (we experimented with $\text{SNR} \in \lbrace 10\text{dB}, 20\text{dB}\rbrace$). Finally, each $y_n$ was sampled according to $f_{Y|H}$ (see \eqref{JointOnYandH}), with $h_n$, $\mu^\star$,  $p^\star$, and $(\sigma^\star)^2$.
\item[2)] Computing $10000$ iterations of the algorithm proposed in  \cite{pereira2018parameter},
with $\rho \in \lbrace 2, 3, 4 \rbrace$, and  of Algorithm 1,
with $\alpha \in \lbrace 0.001, 0.005, 0.01\rbrace$. Both algorithms were initialized according to \eqref{DistributedEmInitialization}.
\end{description}
 The performance metrics consisted in finding a fixed point using the centralized operators as follows. 

We first computed 
\begin{align}
&\theta^0(\alpha)=\frac{1}{N}\sum_{n=1}^N g_1\big(z_n^{10000}(\alpha)\big)\label{InitializationForVariantCentEm}\\
&\theta^0(\rho)=\frac{1}{N}\sum_{n=1}^N \hat{g}_1\big(z_n^{10000}(\rho)\big) \label{InitializationForCentEm},
\end{align}
where: $\alpha \in \lbrace 0.001, 0.005, 0.01\rbrace$; $\rho \in \lbrace 2, 3, 4 \rbrace$; $\hat{g}_1$ corresponds to the map arising from the standard EM algorithm derived in \cite{pereira2018parameter}. In fact, as seen in \cite{pereira2018parameter}, the EM algorithm can be written as 
\begin{align*}
\theta^{k+1}=\hat{g}_1\circ \hat{g_2}(\theta^k),
\end{align*}
where 
\begin{align*}
\hat{g}_2(\theta)=\frac{1}{N}\Big(&\sum_{n=1}^N \Gamma(y_n,h_n,\theta),\sum_{n=1}^N \psi(y_n,h_n,\theta),\\
&\sum_{n=1}^N r(y_n,h_n,\theta),\sum_{n=1}^N y_n^2\Big)
\end{align*}
and
\begin{align*}
\hat{g}_1(\Gamma,\psi,p,a)=&\big(\Gamma^{-1}\psi,p,a-\psi^T\Gamma^{-1}\psi\big).
\end{align*}
We ran the algorithms, with initialization as in \eqref{InitializationForVariantCentEm} and \eqref{InitializationForCentEm}, given by
\begin{align*}
&\theta^{k+1}(\alpha)=g_1\circ g_2\big(\theta^k(\alpha)\big)\\
&\theta^{k+1}(\rho)=\hat{g}_1\circ \hat{g}_2\big(\theta^k(\rho)\big),\\
\end{align*}
until we found $\theta^\star(\alpha)$ and $\theta^\star(\rho)$ satisfying
\begin{align*}
&\Big\|\theta^\star(\alpha)-g_1\circ g_2\big(\theta^\star(\alpha)\big)\Big\|\leq 10^{-10}\\
&\Big\|\theta^\star(\rho)-\hat{g}_1\circ \hat{g}_2\big(\theta^\star(\rho)\big)\Big\|\leq 10^{-10}.
\end{align*}

The error at iteration $k$ of the distributed algorithms was then computed as
\begin{align*}
&\frac{1}{N}\sum_{n=1}^N \Big\|\pi_1\circ g_1\big((z_n^k(\alpha)\big)-\theta^\star(\alpha)\Big\|\\
&\frac{1}{N}\sum_{n=1}^N \Big\|\pi_1\circ \hat{g}_1\big((z_n^k(\rho)\big)-\theta^\star(\rho)\Big\|,
\end{align*}
where $\pi_1$ is the projection onto the average, \emph{i.e.}, $\pi_1(\mu,p,\sigma^2)=\mu$ (as mentioned before, $p$ and $\sigma^2$ were treated as nuisance parameters).

\

The number of Monte Carlo tests was $100$ and the errors at iteration $k$ were averaged  out of $100$ for each $\alpha$ and $\rho$. The results for two different $\text{SNR}$ values are shown in logarithmic scale in Figures 1 and 2.

\begin{figure}[h!]
\centering
\includegraphics[scale=0.6]{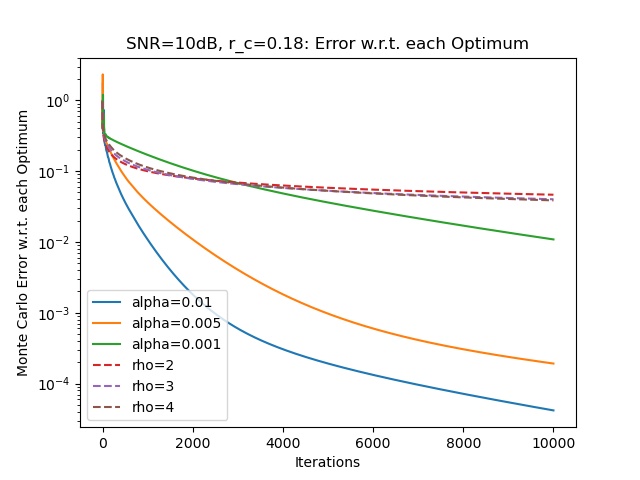}

\caption{The figure shows the result of the Monte Carlo simulation of the error with respect to each optimum for an $\text{SNR}=10$dB and a connectivity radius of $0.18$. The dashed curves correspond to the algorithm from  \cite{pereira2018parameter} with parameter $\rho \in \lbrace 2,3,4\rbrace$ and the non-dashed curves correspond to the DBPI algorithm with parameter $\alpha \in \lbrace 0.001,0.005,0.01\rbrace$.}
\end{figure}

\begin{figure}[h!]
\centering
\includegraphics[scale=0.6]{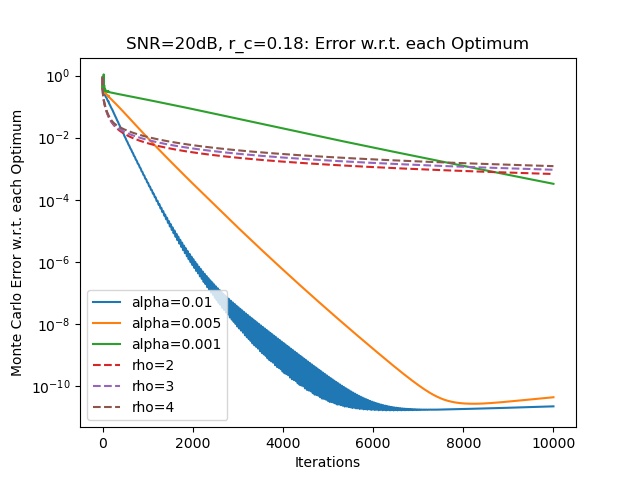}

\caption{The figure shows the result of the Monte Carlo simulation of the error with respect to each optimum for an $\text{SNR}=20$dB and a connectivity radius of $0.18$. The dashed curves correspond to the algorithm from  \cite{pereira2018parameter} with parameter $\rho \in \lbrace 2,3,4\rbrace$ and the non-dashed curves correspond to the DBPI algorithm with parameter $\alpha \in \lbrace 0.001,0.005,0.01\rbrace$.}
\end{figure}

The simulations show, as expected from the theory, that Algorithm 1 converges linearly and clearly outperforms the algorithm from \cite{pereira2018parameter}, which, given its diminishing step-size, is bound to converge only sub-linearly. Moreover, both algorithms require just one round of communications per iteration.

\section{Conclusion}
This article builds upon the distributed Banach-Picard algorithm and its convergence properties provided in  \cite{FranciscodistributedPicardTheory} to make two main contributions: we provided a proof of local linear convergence for the distributed PCA algorithm suggested in \cite{gang2019fast}, thereby filling a gap left by that work; starting from the distributed Banach-Picard iteration, we proposed a distributed algorithm for solving the parameter estimation problem from noisy and faulty measurements that had been  addressed in \cite{pereira2018parameter}. Unlike the algorithm in \cite{pereira2018parameter}, which uses diminishing step sizes, thus exhibiting sublinear convergence rate, the proposed instance of the distributed Banach-Picard iteration is guaranteed to have local linear convergence. Numerical experiments confirm the theoretical advantage of the proposed method with respect to that from \cite{pereira2018parameter}.

\appendices

\section{Regularity Conditions} \label{RegularityConditionSect}
\begin{theorem}\label{th:regularity}
Let $K\subset \Theta$ be a compact set containing $\theta^\star$. Then, for all $\theta \in K$,
\begin{align}\label{PolynomialPartialPhi}
&\Big|\frac{\partial^i \phi}{\partial^{i_1}x_1\ldots\partial^{i_k}x_k}(y,h,\theta)\Big|\leq \mathcal{P}^\phi_{\partial^{i_1}x_1\ldots\partial^{i_k}x_k}(|y|,|h_1|,\ldots,|h_d|)\\\label{PolynomialPartialr}
&\Big|\frac{\partial^i r}{\partial^{i_1}x_1\ldots\partial^{i_k}x_k}(y,h,\theta)\Big|\leq \mathcal{P}^r_{\partial^{i_1}x_1\ldots\partial^{i_k}x_k}(|y|,|h_1|,\ldots,|h_d|),
\end{align}
where $\sum_{j=1}^Ki_j=i\geq 1$, $x_1, \ldots, x_k$ are dummy variables in $\lbrace \mu_1, \ldots, \mu_d, p,\sigma^2\rbrace$ (\emph{i.e.} consider partial diferentiation with respect to the components of $\theta$), and where
\begin{align*}
&\mathcal{P}^r_{\partial^{i_1}x_1\ldots\partial^{i_k}x_k} \quad \text{and} \quad \mathcal{P}^\phi_{\partial^{i_1}x_1\ldots\partial^{i_k}x_k}
\end{align*}
are polynomials.
\end{theorem}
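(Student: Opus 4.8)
The plan is to reduce everything to two elementary facts: that $r$ is a logistic sigmoid in a convenient reparametrization, and that the first-order derivatives of $\phi$ are already given explicitly by \eqref{GradientOfPhi} in terms of $r$. Cancelling the common Gaussian normalizing constant in \eqref{ResponsabilityFunction} and dividing numerator and denominator by $p\,\mathcal{N}(y|h^T\mu,\sigma^2)$ gives
\begin{align*}
r(y,h,\theta)&=\frac{1}{1+e^{w(y,h,\theta)}},\\
w(y,h,\theta)&=\log\frac{1-p}{p}+\frac{(h^T\mu)^2-2yh^T\mu}{2\sigma^2},
\end{align*}
so that $r=s\circ w$, where $s(t)=(1+e^{t})^{-1}$ is the logistic sigmoid.

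To bound the derivatives of $r$, I would use two observations. First, every derivative $s^{(k)}$ is a polynomial in $s$, and since $s\in(0,1)$ each $s^{(k)}$ is bounded by a constant depending only on $k$. Second, every partial derivative of $w$ with respect to the components of $\theta$ is a polynomial in $(y,h_1,\ldots,h_d)$ whose coefficients are continuous functions of $\theta$ on all of $\Theta$: indeed $\partial_{\mu_i}w=\sigma^{-2}h_i(h^T\mu-y)$ and $\partial^2_{\mu_i\mu_j}w=\sigma^{-2}h_ih_j$ (higher $\mu$-derivatives vanish), differentiation in $p$ acts only on $\log\frac{1-p}{p}$ and yields functions of $p$ alone, and differentiation in $\sigma^2$ merely multiplies by powers of $\sigma^{-2}$. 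Because $K$ is a compact subset of $\Theta=\mathbb{R}^d\times(0,1)\times(0,+\infty)$, on $K$ the quantity $\sigma^2$ is bounded below by a positive constant while $p$ is bounded away from $0$ and $1$, so all these $\theta$-dependent coefficients are bounded uniformly over $K$. Applying the multivariate Fa\`{a} di Bruno formula to $r=s\circ w$, each partial derivative of $r$ of order $i$ is a finite sum of products of one bounded factor $s^{(\ell)}(w)$ with partial derivatives of $w$; every such product is a polynomial in $(y,h)$ with $K$-uniformly bounded coefficients, hence dominated in absolute value by a polynomial $\mathcal{P}^r$ in $|y|,|h_1|,\ldots,|h_d|$. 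This establishes \eqref{PolynomialPartialr}.

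For $\phi$ I would argue by induction on the order, starting from the explicit first-order partials obtained by solving \eqref{GradientOfPhi}:
\begin{align*}
\nabla_\mu\phi&=\frac{r\,(y-h^T\mu)\,h}{\sigma^2},\\
\frac{\partial\phi}{\partial p}&=\frac{r-p}{p(1-p)},\\
\frac{\partial\phi}{\partial\sigma^2}&=\frac{r(y-h^T\mu)^2+(1-r)y^2-\sigma^2}{2(\sigma^2)^2}.
\end{align*}
Each is a finite sum of terms of the form $c(\theta)\,\big(\partial^{(\alpha)}r\big)\,q(y,h)$, with $c$ continuous on $\Theta$ and $q$ a polynomial in $(y,h)$. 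This structural form is preserved under further $\theta$-differentiation: differentiating $c(\theta)$ leaves a continuous coefficient, differentiating $q$ leaves a polynomial, and differentiating $\partial^{(\alpha)}r$ merely raises the order of the $r$-derivative, which remains polynomially bounded by the previous paragraph. Hence every partial derivative of $\phi$ of order $i\ge 1$ is again a finite sum of such terms; bounding the coefficients $c(\theta)$ over the compact set $K$ and invoking \eqref{PolynomialPartialr} yields a dominating polynomial $\mathcal{P}^\phi$, proving \eqref{PolynomialPartialPhi}.

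I expect the main obstacle to be purely organizational rather than analytic: keeping control of the combinatorics of the high-order multivariate derivatives so that, at every order, the two reductions stay transparent — the outer sigmoid derivatives remaining bounded and the inner derivatives remaining polynomial in $(y,h)$ with coefficients continuous in $\theta$. The only genuinely analytic point is the uniformity over $K$, and this reduces entirely to the compactness of $K\subset\Theta$, which keeps $\sigma^2$ away from $0$ and $p$ away from $\{0,1\}$.
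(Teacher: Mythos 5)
Your proof is correct and follows essentially the same route as the paper's: the paper's starting point is precisely your sigmoid identity in differential form ($\partial r = r(1-r)\times$ rational expressions with denominators in $p(1-p)$ and powers of $\sigma^2$), and its induction via the chain and quotient rules --- every partial derivative of $r$ keeps the form $\tilde{\mathcal{P}}(r,y,h,\theta)/\tilde{\mathcal{Q}}(p,\sigma^2)$ --- is a hand-rolled version of your Fa\`a di Bruno bookkeeping for $r=s\circ w$, with both conclusions resting on the same three facts: $0<r\leq 1$, polynomial dependence on $(y,h)$, and compactness of $K\subset\Theta$ keeping $p$ away from $\{0,1\}$ and $\sigma^2$ away from $0$. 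Your reduction of \eqref{PolynomialPartialPhi} to \eqref{PolynomialPartialr} through the explicit first-order identities \eqref{GradientOfPhi} is likewise the paper's final step.
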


\

We start with a proof of \eqref{PolynomialPartialr}. Note that $r$ satisfies the following differential equations
\begin{align*}
\frac{\partial r}{\partial \mu_i}(y,h,\theta)&=\big(1-r(\theta)\big)r(\theta)\frac{y-h^T\mu}{\sigma^2}\mu_i, \quad i=1, \ldots, d,\\
\frac{\partial r}{\partial p}(y,h,\theta)&=\frac{1}{p(1-p)}r(\theta)\big(1-r(\theta)\big),\\
\frac{\partial r}{\partial \sigma^2}(y,h,\theta)&=\Big(\frac{(y-h^T\mu)^2}{2(\sigma^2)^2}-\frac{y^2}{2(\sigma^2)^2}\Big)r(\theta)\big(1-r(\theta)\big).
\end{align*}
We deduce that
\begin{align} \label{FirstDerivativePolynomialOfR}
\frac{\partial r}{\partial \lambda}(y,h,\theta)=\frac{\tilde{\mathcal{P}}_\lambda^r(r(\theta),y,h,\theta)}{\tilde{\mathcal{Q}}_\lambda^r(p,\sigma^2)},
\end{align}
where $\tilde{\mathcal{P}}_\lambda^r$ and $\tilde{\mathcal{Q}}_\lambda^r$ are polynomials and $\lambda$ is a dummy variable in $\lbrace \mu_1, \ldots, \mu_d,p,\sigma^2\rbrace$.

The chain rule of differentiation, the quotient rule of differentiation and the form \eqref{FirstDerivativePolynomialOfR} imply that
\begin{align}\label{NthDerivativePolynomialOfR}
\frac{\partial^i r}{\partial^{i_1}x_1\ldots\partial^{i_k}x_k}(y,h,\theta)= \frac{\tilde{\mathcal{P}}_{\partial^{i_1}x_1\ldots\partial^{i_k}x_k}^r(r(\theta),y,h,\theta)}{\tilde{\mathcal{Q}}_{\partial^{i_1}x_1\ldots\partial^{i_k}x_k}^r(p,\sigma^2)}.
\end{align}

The result now follows easily from $0<r(\theta)\leq 1$, the compactness of $K$ and the identity \eqref{NthDerivativePolynomialOfR}.

The inequality \eqref{PolynomialPartialPhi} follows from  \eqref{NthDerivativePolynomialOfR} and the form of the gradient of $\phi$ in \eqref{GradientOfPhi}.  This concludes the proof of Theorem \ref{th:regularity}.

\

An immediate corollary of  Theorem \ref{th:regularity} is that the absolute values of the partial derivatives of both $\phi$ and $r$ are dominated by functions whose expectation exists and is finite; this is the content of the following result.

\

\begin{theorem}
Let $\mathcal{P}$ be a polynomial in $d+1$ variables. Then
\begin{align*}
\mathbb{E}_{\theta^\star}\big[\mathcal{P}(|y|,|h_1|,\ldots,|h_d|)\big]
\end{align*}
exists and is finite.
\end{theorem}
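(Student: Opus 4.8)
The plan is to reduce the statement to the finiteness of mixed absolute moments of Gaussian random variables, which is classical. Since $\mathcal{P}$ is a polynomial, by the triangle inequality and linearity of expectation it suffices to bound $\mathbb{E}_{\theta^\star}\big[|y|^{a_0}|h_1|^{a_1}\cdots|h_d|^{a_d}\big]$ for an arbitrary monomial with nonnegative integer exponents $a_0,a_1,\ldots,a_d$. First I would exploit the factorization \eqref{JointModel} by conditioning on $H$ (equivalently, integrating out $Y$ first via the law of total expectation):
\begin{align*}
\mathbb{E}_{\theta^\star}\big[|y|^{a_0}|h_1|^{a_1}\cdots|h_d|^{a_d}\big]=\mathbb{E}_{H}\Big[|h_1|^{a_1}\cdots|h_d|^{a_d}\,\mathbb{E}_{\theta^\star}\big[|y|^{a_0}\,\big|\,H=h\big]\Big].
\end{align*}

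Next I would control the inner conditional moment. Conditioned on $H=h$, the variable $Y$ is distributed according to the two-component Gaussian mixture in \eqref{JointOnYandH}, both components having variance $(\sigma^\star)^2$ and means $h^T\mu^\star$ and $0$. Using the elementary inequality $|Y|^{a_0}\le 2^{a_0-1}\big(|Y-m|^{a_0}+|m|^{a_0}\big)$ (for $a_0\geq 1$; the case $a_0=0$ being trivial) together with the fact that the absolute central moments of a Gaussian are finite constants depending only on the order and on $\sigma^\star$, I obtain a bound of the form $\mathbb{E}_{\theta^\star}\big[|y|^{a_0}\mid H=h\big]\le C\big(1+|h^T\mu^\star|^{a_0}\big)$ for a constant $C=C(a_0,\sigma^\star)$. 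Since $|h^T\mu^\star|\le \|\mu^\star\|\sum_{i=1}^d|h_i|$, this conditional moment is dominated by a polynomial in $|h_1|,\ldots,|h_d|$.

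Substituting back, the quantity to bound becomes the expectation over $H\sim\mathcal{N}(0,I_d)$ of a polynomial in $|h_1|,\ldots,|h_d|$. Expanding this polynomial and using linearity once more, I reduce to monomials $\mathbb{E}_H\big[|h_1|^{b_1}\cdots|h_d|^{b_d}\big]$, which, by independence of the components of $H$, factor as $\prod_{i=1}^d\mathbb{E}\big[|h_i|^{b_i}\big]$; each factor is a finite absolute moment of a standard Gaussian. This establishes finiteness.

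There is no genuine obstacle here — the argument is essentially bookkeeping — but the one point requiring care is the coupling between $Y$ and $H$, which is resolved by conditioning on $H$ \emph{before} bounding the $Y$-moment; the mean of $Y$ then appears as $h^T\mu^\star$ and must be absorbed into the polynomial growth in $h$ rather than treated as a constant. The Bernoulli variable $Z$ causes no difficulty, since $Z\in\{0,1\}$ and both mixture components are handled by the same bound.
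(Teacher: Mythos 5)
Your proof is correct and follows essentially the same route as the paper's: reduce $\mathcal{P}$ to monomials via linearity and the triangle inequality, then invoke the finiteness of absolute central and non-central moments of Gaussians. The paper explicitly skips this second step as ``an easy consequence,'' and your conditioning on $H$, bounding $\mathbb{E}_{\theta^\star}\big[|y|^{a_0}\,\big|\,H=h\big]$ by $C\big(1+|h^T\mu^\star|^{a_0}\big)$, and factorizing over the independent standard Gaussian components of $H$ is precisely the bookkeeping that fills in the omitted argument.
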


\

To prove this theorem, observe that $\mathcal{P}(|y|,|h_1|,\ldots,|h_d|)$ is a sum of elements of the form
\begin{align*}
b|y|^{n_0}|h_1|^{n_1}\ldots|h_d|^{n_d},
\end{align*}
and, hence, it is enough to show that $$\mathbb{E}_{\theta^\star}\big[|y|^{n_0}|h_1|^{n_1}\ldots|h_d|^{n_d}\big]$$ exists and is finite. This last fact is an easy consequence of the existence of absolute non-central and central moments of Gaussians and, therefore, we will skip the proof.

\section{Auxiliary Results and definitions} \label{AuxiliaryAppendix}
\begin{theorem}[\cite{newey1994large}, page 2129]\label{UniformLawOfLargeNumbers}
Let $a(z,\theta)$ be a matrix of functions of an observation $z$ and the parameter $\theta$. If the $z_1, \ldots, z_N$ are i.i.d., $\Omega$ is compact, $a(z_i,\theta)$ is continuous at each $\theta$ and there is $d(z)$ with $\|a(z,\theta)\|_F \leq d(z)$ for all $\theta \in \Omega$, where $\mathbb{E}[d(z)]$ exists and is finite, then $\mathbb{E}[a(z,\theta)]$ is continuous and
\begin{align*}
\sup_{\theta \in \Omega} \Big\|\frac{1}{N}\sum_{j=1}^N a(z_j,\theta)-\mathbb{E}[a(z,\theta)]\Big\|_F \to 0
\end{align*}
in probability.
\end{theorem}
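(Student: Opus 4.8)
The plan is to reduce the \emph{uniform} statement to the ordinary (pointwise) weak law of large numbers by covering the compact set $\Omega$ with finitely many small balls and using the integrable envelope $d(z)$ to control the oscillation of $a(z,\cdot)$ across each ball. This is the classical compactness-plus-domination route to a uniform law of large numbers.

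First I would record that $\theta\mapsto\mathbb{E}[a(z,\theta)]$ is continuous: for $\theta_n\to\theta$ we have $a(z,\theta_n)\to a(z,\theta)$ for each $z$ by hypothesis, while $\|a(z,\theta_n)\|_F\leq d(z)$ with $\mathbb{E}[d(z)]<\infty$, so dominated convergence gives $\mathbb{E}[a(z,\theta_n)]\to\mathbb{E}[a(z,\theta)]$. The quantitative device is the \emph{modulus of continuity}
\begin{align*}
\omega(z,\theta,\delta)=\sup_{\theta'\in\Omega,\,\|\theta'-\theta\|<\delta}\big\|a(z,\theta')-a(z,\theta)\big\|_F .
\end{align*}
By continuity of $a(z,\cdot)$ this supremum may be taken over a countable dense subset of $\Omega\cap B(\theta,\delta)$, so $\omega$ is measurable in $z$; moreover $\omega(z,\theta,\delta)\downarrow 0$ as $\delta\downarrow 0$ for each fixed $z,\theta$, and $\omega(z,\theta,\delta)\leq 2d(z)$. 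Dominated convergence then yields $\mathbb{E}[\omega(z,\theta,\delta)]\to 0$ as $\delta\to 0$.

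Next comes the covering step. Given $\epsilon>0$, for each $\theta\in\Omega$ pick $\delta_\theta>0$ with $\mathbb{E}[\omega(z,\theta,\delta_\theta)]<\epsilon$; the balls $B(\theta,\delta_\theta)$ cover $\Omega$, so by compactness finitely many centers $\theta_1,\dots,\theta_K$ suffice. For $\theta$ in the ball around $\theta_k$ the triangle inequality gives
\begin{align*}
\Big\|\frac{1}{N}\sum_{j=1}^N a(z_j,\theta)-\mathbb{E}[a(z,\theta)]\Big\|_F
&\leq \frac{1}{N}\sum_{j=1}^N\omega(z_j,\theta_k,\delta_{\theta_k})\\
&\quad+\Big\|\frac{1}{N}\sum_{j=1}^N a(z_j,\theta_k)-\mathbb{E}[a(z,\theta_k)]\Big\|_F\\
&\quad+\mathbb{E}[\omega(z,\theta_k,\delta_{\theta_k})].
\end{align*}
The last term is below $\epsilon$ by construction; applying the ordinary weak law of large numbers to the finitely many integrable functions $a(\cdot,\theta_k)$ and $\omega(\cdot,\theta_k,\delta_{\theta_k})$ sends the middle term to $0$ and the first term to $\mathbb{E}[\omega(z,\theta_k,\delta_{\theta_k})]<\epsilon$, both in probability. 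Taking the maximum over the finitely many $k$ shows that, with probability tending to $1$, the supremum over all $\theta\in\Omega$ on the left is below $3\epsilon$; since $\epsilon$ is arbitrary, the claimed convergence in probability follows.

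The main obstacle I anticipate is the dominated-convergence step for the modulus of continuity: one must confirm that $\omega(z,\theta,\delta)$ is genuinely measurable (handled by reducing the supremum to a countable dense set via continuity of $a(z,\cdot)$) and that it decreases to $0$ pointwise in $\delta$ while remaining dominated by the integrable envelope $2d(z)$. Once this is in place, the remainder is a routine assembly of the pointwise weak law of large numbers with the finite subcover extracted from compactness of $\Omega$.
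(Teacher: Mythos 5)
Your proof is correct, but note that the paper itself does not prove this statement: it is imported verbatim from \cite{newey1994large} (it is the uniform weak law of large numbers, Lemma~2.4 of that reference) and used as a black box in the proof of Theorem~\ref{CrucialLemma}. Your argument reproduces the standard proof from that source --- continuity of $\theta \mapsto \mathbb{E}[a(z,\theta)]$ by dominated convergence, the modulus of continuity $\omega(z,\theta,\delta)$ made measurable via a countable dense subset and driven to zero in expectation by dominated convergence against the envelope $2d(z)$, a finite subcover of the compact $\Omega$, and the pointwise weak law applied at the finitely many centers --- and each step, including the bound $\|\mathbb{E}[a(z,\theta_k)]-\mathbb{E}[a(z,\theta)]\|_F \leq \mathbb{E}[\omega(z,\theta_k,\delta_{\theta_k})]$ and the final union bound over $k$, is sound, so there is no gap to flag.
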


\

Let $X_n$ be a sequence of random vectors. We use the notation $X_n=o_P(1)$, to denote that $X_n$ converges to $0$ in probability, \emph{i.e.}, if, for every $\epsilon>0$, the non-random sequence
\begin{align*}
\mathbb{P}(\|X_n\|\leq \epsilon) 
\end{align*}
converges to $1$. 

If $X_n$ is uniformly bounded in probability, \emph{i.e.}, if, for every $\epsilon>0$, there exists $M(\epsilon)>0$, such that
\begin{align*}
\mathbb{P}\big(\|X_n\|> M(\epsilon)\big)< \epsilon, \quad \forall n,
\end{align*}
we denote this by $X_n=O_P(1)$ (see \cite{van2000asymptotic} for more details and also for the calculus with the $O_P(1)$ and $o_P(1)$).

\section{Proof of Theorem \ref{CrucialLemma}}\label{ProofOfCrucialLemma}
We give only a sketch of the proof of \eqref{CrucialTheoremSecondClaim} (the proof of \eqref{CrucialTheoremFirstClaim} is analogous). Observe that
\begin{align*}
&T_N^\prime(\theta)-T^\prime(\theta)=\\
&\big(A(\theta)\big)^{-1}\Big(A(\theta)-A_N(\theta)\Big)\big(A_N(\theta)\big)^{-1}\frac{1}{N}\sum_{n=1}^N \nabla_\theta^2\phi(y_n,h_n,\theta)\\
&+\big(A(\theta)\big)^{-1}\Big(\frac{1}{N}\sum_{n=1}^N \nabla_\theta^2\phi(y_n,h_n,\theta)-\mathbb{E}_{\theta^\star}\big[\nabla^2_\theta\phi(y,h,\theta)\big]\Big),
\end{align*}
which implies that
\begin{equation} \label{UpperBoundOnTNminusT}
\begin{aligned}
&\|T_N^\prime(\theta)-T^\prime(\theta)\|\leq \big\|A(\theta)\big\|^{-1}\times\\
\Big(&\big\|A(\theta)-A_N(\theta)\big\|\big\|A_N(\theta)\big\|^{-1}\Big\|\frac{1}{N}\sum_{n=1}^N \nabla_\theta^2\phi(y_n,h_n,\theta)\Big\|\\
+&\Big\|\frac{1}{N}\sum_{n=1}^N \nabla_\theta^2\phi(y_n,h_n,\theta)-\mathbb{E}_{\theta^\star}\big[\nabla^2_\theta\phi(y,h,\theta)\big]\Big\|\Big).
\end{aligned}
\end{equation}
From Theorem \ref{UniformLawOfLargeNumbers} (see appendix \ref{AuxiliaryAppendix}),
\begin{align}
\label{CrucialTheoremFirstClaimVariant}
&\sup_{\theta \in \bar{B}^{\|\cdot\|}_{\delta,\theta^\star}}\big\|A_N(\theta)-A(\theta)\big\|_F\to 0, \\ \label{CrucialTheoremSecondClaimVariant}
&\sup_{\theta \in \bar{B}^{\|\cdot\|}_{\delta,\theta^\star}}\big\|\frac{1}{N}\sum_{n=1}^N \nabla_\theta^2\phi(y_n,h_n,\theta)-\mathbb{E}_{\theta^\star}[\nabla^2_\theta\phi(y,h,\theta)]\big\|_F\to 0,
\end{align}
in probability; 
these are  consequences of Theorem \ref{UniformLawOfLargeNumbers}, by noting that:
\begin{description}
\item[a)] $\|\frac{1}{\sigma^2}\Gamma(y,h,\theta)\|_F\leq M\|hh^T\|_F$, where $M$ is the maximum of $\frac{1}{\sigma^2}$ on $\bar{B}^{\|\cdot\|}_{\delta,\theta^\star}$ and where we used the fact that $|r(y,h,\theta)|\leq 1$;
\item[b)] $\|\nabla^2_\theta \phi(y,h,\theta)\|_F\leq g(y,h)$ on $\bar{B}^{\|\cdot\|}_{\delta,\theta^\star}$, for some map $g$ not depending on $\theta$ for which $\mathbb{E}_{\theta^\star}[g(y,h)]$ exists and is finite (see appendix \ref{RegularityConditionSect}).
\end{description}

Since all norms are equivalent, \eqref{CrucialTheoremFirstClaimVariant}-\eqref{CrucialTheoremSecondClaimVariant}  also holds if the Frobenius norm is replaced by any other norm.

Taking the supremum over  on $\bar{B}^{\|\cdot\|}_{\delta,\theta^\star}$ on both sides of \eqref{UpperBoundOnTNminusT}, we obtain, from \eqref{CrucialTheoremFirstClaimVariant}-\eqref{CrucialTheoremSecondClaimVariant}, that 
\begin{align*}
&\sup_{\theta \in \bar{B}^{\|\cdot\|}_{\delta,\theta^\star}}\big\|T_N^\prime(\theta)-T^\prime(\theta)\big\|\\
\leq& o_P(1)\sup_{\theta \in \bar{B}^{\|\cdot\|}_{\delta,\theta^\star}}\|A_N(\theta)\|^{-1}\sup_{\theta \in \bar{B}^{\|\cdot\|}_{\delta,\theta^\star}}\big\|\frac{1}{N}\sum_{n=1}^N \nabla_\theta^2\phi(y_n,h_n,\theta)\big\|\\
+&o_P(1),
\end{align*}
where the definitions of $o_P(1)$ and $O_P(1)$ can be found in appendix \ref{AuxiliaryAppendix}.

From \eqref{CrucialTheoremFirstClaimVariant} and \eqref{CrucialTheoremSecondClaimVariant}, together with the compactness of $\bar{B}^{\|\cdot\|}_{\delta,\theta^\star}$, we can deduce (proof   omitted) that
\begin{align*}
\sup_{\theta \in \bar{B}^{\|\cdot\|}_{\delta,\theta^\star}}\|A_N(\theta)\|^{-1}&=O_P(1)\\
\sup_{\theta \in \bar{B}^{\|\cdot\|}_{\delta,\theta^\star}}\big\|\frac{1}{N}\sum_{n=1}^N \nabla_\theta^2\phi(y_n,h_n,\theta)\big\|&=O_P(1).
\end{align*}

Putting everything together, we conclude that
\begin{align*}
&\sup_{\theta \in \bar{B}^{\|\cdot\|}_{\delta,\theta^\star}}\big\|T_N^\prime(\theta)-T^\prime(\theta)\big\|
\leq\\& o_P(1)O_P(1)O_P(1)+o_P(1)=o_P(1),
\end{align*}
where the equality follows  from the calculus rules with $O_P(1)$ and $o_P(1)$  \cite{van2000asymptotic}.

As mentioned, the proof of \eqref{CrucialTheoremFirstClaim} is entirely analogous; just note that, in order to use Theorem \ref{UniformLawOfLargeNumbers}, we need to check that $\|\nabla_\theta \phi(y,h,\theta)\|\leq \tilde{g}(y,h)$ on $\bar{B}^{\|\cdot\|}_{\delta,\theta^\star}$, for some map $\tilde{g}$ not depending on $\theta$, for which $\mathbb{E}[\tilde{g}(y,z)]$ exists and is finite; this is again a consequence of the results proved in appendix \ref{RegularityConditionSect}.

\section{Proof of Lemma \ref{SangerFixedPointTheorem}} \label{AppendixProofOfLemma}
Suppose $X^\star$ satisfies \eqref{SangerFixedPoint}.  Throughout this proof, $x_i^\star$ denotes the $i$th column of $X^\star$.
Consider the equation imposed by the first column,  $x^\star_1$, \emph{i.e.},
\begin{align*}
Cx^\star_1
=\big((x^\star_1)^TCx^\star_1 \big)x^\star_1,
\end{align*}
and multiply both sides by $(x^\star_1)^T$, which yields
\begin{align*} 
\big((x^\star_1)^TCx^\star_1\big)\big(1-\|x^\star_1\|^2\big)=0.
\end{align*}
From the two  equalities
\begin{align*}
\big((x^\star_1)^TCx^\star_1\big)x^\star_1&=Cx^\star_1 ,\\
\big((x^\star_1)^TCx^\star_1\big)\big(1-\|x^\star_1\|^2\big)&=0,
\end{align*}
we conclude that either $x^\star_1=0$ or $x^\star_1$ is a unit-norm eigenvector of $C$.

Considering the second column, we prove that $x^\star_2$ is either zero or a unit-norm  eigenvector of $C$ that is orthogonal to $x^\star_1$. Observe that
\begin{equation}\label{SecondColumnOfFixedPoint}
\begin{aligned}
Cx^\star_2=
\big((x^\star_1)^TCx^\star_2\big)x^\star_1+\big((x^\star_2)^TCx^\star_2\big)x^\star_2.
\end{aligned}
\end{equation}
Now recall that $x^\star_1=0$ or $x^\star_1$ is a unit-norm eigenvector of $C$. If $x^\star_1=0$, then \eqref{SecondColumnOfFixedPoint} reduces to
\begin{align*}
Cx^\star_2=\big((x^\star_2)^TCx^\star_2\big)x^\star_2
\end{align*}
and the result follows as in the case of $x^\star_1$.
If $x^\star_1 \neq 0$, then it is a unit-norm eigenvector of $C$ and, hence, there exists $\beta$ such that $(x_1^\star)^TC=\beta (x_1^\star)^T$ and \eqref{SecondColumnOfFixedPoint} reduces to
\begin{align} \label{SecondColumnIdentity}
&Cx^\star_2=\beta \big((x^\star_1)^Tx^\star_2\big)x^\star_1+\big((x^\star_2)^TCx^\star_2\big)x^\star_2.
\end{align}

Multiply on the left by $(x^\star_1)^T$ and use  $\|x^\star_1\|^2=1$ to obtain
\begin{align*}
\big((x^\star_2)^TCx^\star_2\big)(x^\star_1)^Tx^\star_2=0.
\end{align*}
If $x^\star_2=0$, we are done. If not, then $0=(x^\star_1)^Tx^\star_2$ and, returning to \eqref{SecondColumnIdentity}, it holds that
\begin{align*}
Cx^\star_2=\big((x^\star_2)^TCx^\star_2\big)x^\star_2.
\end{align*}

This establishes the claim for $x^\star_1$ and $x^\star_2$. Proceeding as we did for the second column, it is possible to construct a proof by induction establishing the result.


%


\section*{Acknowledgment}

This work was partially funded by the Portuguese \textit{Fundação para a Ciência e Tecnologia} (FCT), under grants  PD/BD/135185/2017 and UIDB/50008/2020. The work of João Xavier was supported in part by the Fundação para a Ciência e Tecnologia, Portugal, through the Project LARSyS, under Project FCT Project UIDB/50009/2020 and Project HARMONY PTDC/EEI-AUT/31411/2017 (funded by Portugal 2020 through FCT, Portugal, under Contract AAC n 2/SAICT/2017–031411. IST-ID funded by POR Lisboa under Grant LISBOA-01-0145-FEDER-031411).

\ifCLASSOPTIONcaptionsoff
  \newpage
\fi

\vfill






\end{document}